\theoremstyle{plain}
\newtheorem{thm}{Theorem}[section]
\newtheorem{lem}[thm]{Lemma}
\newtheorem{cor}[thm]{Corollary}
\newtheorem{prop}[thm]{Proposition}
\theoremstyle{definition}
\theoremstyle{remark}
\newtheorem{exam}[thm]{Example}
\DeclareMathOperator{\Ann}{Ann}
\DeclareMathOperator{\Ext}{Ext}
\DeclareMathOperator{\Supp}{Supp}
\DeclareMathOperator{\V}{V}
\DeclareMathOperator{\Hom}{Hom}
\DeclareMathOperator{\Ker}{Ker}
\DeclareMathOperator{\depth}{depth}
\DeclareMathOperator{\cd}{cd}
\DeclareMathOperator{\q}{q}
\DeclareMathOperator{\Ass}{Ass}
\DeclareMathOperator{\Coass}{Coass}
\DeclareMathOperator{\Max}{Max}
\DeclareMathOperator{\LC}{H}
\DeclareMathOperator{\Spec}{Spec}
\DeclareMathOperator{\G}{\Gamma}
\DeclareMathOperator{\Att}{Att}
\DeclareMathOperator{\E}{E}
\newcommand{\fa}{\mathfrak{a}}
\newcommand{\fm}{\mathfrak{m}}
\newcommand{\fp}{\mathfrak{p}}
\begin{document}

\title[Local cohomology: vanishing, artinianness and finiteness ]
{On the vanishing, artinianness and finiteness of local cohomology
modules\\}

\author{Moharram Aghapournahr}
\address{ Moharram Aghapournahr\\ Arak University\\ Beheshti St, P.O. Box:879,
Arak, Iran}
\email{m-aghapour@araku.ac.ir}

\author{Leif Melkersson}
\address{Leif Melkersson\\Department of Mathematics
\\ Link\"{o}ping University\\ S-581 83 Link\"{o}ping\\ Sweden}

\email{lemel@mai.liu.se}

\keywords{Local cohomology, minimax module, coatomic module.\\}

\subjclass[2000]{13D45, 13D07}


\begin{abstract}
 Let $R$ be a noetherian ring, $\fa$ an ideal of $R$, and  $M$ an $R$--module.
 We prove that for a finite module $M$, if  $\LC^{i}_{\fa}(M)$ is
minimax for all
$i\geq r\geq 1$, then $\LC^{i}_{\fa}(M)$ is artinian
 for $i\geq r$.
A Local-global Principle for minimax local cohomology modules is shown.
If $\LC^{i}_{\fa}(M)$ is coatomic for $i\leq r$ ($M$ finite)
then $\LC^{i}_{\fa}(M)$ is finite for $i\leq r$. We give conditions for
a module, which is locally minimax to be a minimax module.
A non-vanishing theorem and some vanishing theorems are proved for
local cohomology modules.
\end{abstract}

\maketitle

\section{Introduction}

Throughout $R$ is a commutative noetherian ring. For unexplained
items from homological and commutative algebra we refer to \cite{BH}
and \cite{Mat}.

Huneke gave in \cite{Hu} a survey of some important problems on
finiteness, vanishing and artinianness of local cohomology modules.
We give some further contributions to the study of certain
finiteness, vanishing and artinianness
 results for the local cohomology modules $\LC^{i}_{\fa}(M)$ for an
$R$--module $M$ with respect to an ideal $\fa$.
A thorough treatment of local cohomology is given by
Brodmann and Sharp in \cite{BSh}.

A module $M$ is a \emph{minimax} module if there is a finite (i.e.
finitely generated) submodule $N$ of $M$ such that the quotient
module $M/N$ is artinian. Thus the class of minimax modules includes
all finite and all artinian modules.
Moreover, it is closed under taking submodules, quotients and extensions,
i.e., it is a Serre subcategory of the
 category of $R$--modules.
Minimax modules have been studied by Zink in \cite{Zi} and  Z\"{o}schinger in
 \cite{Zrmm,Zrrad}. See also \cite{Ru}.
Many equivalent conditions for a module to be minimax are given by them.
  We summarize some of those as follows:
\begin{thm}\label{T:mmeq}
For a module $M$ over the commutative noetherian ring $R$, the
following conditions are equivalent:
\begin{enumerate}
  \item[(i)] $M/N$ has finite Goldie dimension for each submodule $N$ of $M$.
  \item[(ii)] $M/N$ has finite socle for each submodule $N$ of $M$.
  \item[(iii)] $M/N$ is an artinian module whenever
    $N$ is a submodule of $M$, such that $\Supp_R(M/N)\subset \Max{R}$.
  \item[(iv)] $M/N$ is artinian for some finite submodule $N$ of $M$.
  \item[(v)] For each increasing sequence
    $N_1 \subset N_2 \subset\dots $ of submodules of $M$ there is $l$ such that
   $ N_{n+1}/N_n$ is artinian for all $n\geq l$.
  \item[(vi)] For each decreasing sequence
    $N_1 \supset N_2 \supset\dots$ of submodules of $M$ there is $l$ such that
   $ N_{n+1}/N_n$ is finite for all $n\geq l$.
  \item[(vii)] {\rm{(}}When $(R,\fm)$ is a complete local ring{\rm{)}}
    $M$ is Matlis reflexive.
\end{enumerate}
\end{thm}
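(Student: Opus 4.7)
Since (iv) is the defining property, my plan is to verify that it is equivalent to each of the remaining conditions through a cycle of implications. For the forward directions (iv) $\Rightarrow$ (i) and (iv) $\Rightarrow$ (ii), I would use that the class of minimax modules is a Serre subcategory, so $M/N$ is again minimax, and that both finite Goldie dimension and finite socle are additive on short exact sequences, hold for finite modules (by noetherianness), and hold for artinian modules (whose socle is essential in the injective envelope, a finite direct sum of modules $\E(R/\fm)$). For (iv) $\Rightarrow$ (v) and (vi), pick a finite $L \subset M$ with $M/L$ artinian; given a chain $(N_n)$, the sequence $N_n \cap L$ stabilizes in the noetherian $L$ while $(N_n + L)/L$ stabilizes in the artinian $M/L$, and an application of the modular law identifies $N_{n+1}/N_n$ (or $N_n/N_{n+1}$ in the descending case) with a subquotient of $M/L$, hence artinian, or of $L$, hence finite. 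The intermediate arrows (i) $\Rightarrow$ (ii) $\Rightarrow$ (iii) are routine: (i) forces the socle, being semisimple, to be a finite sum of simples, and under (ii) a quotient $M/N$ with $\Supp(M/N) \subset \Max R$ has essential socle (every associated prime being maximal); this socle is finite by hypothesis, so $M/N$ embeds in a finite direct sum of artinian modules $\E(R/\fm)$ and is itself artinian.

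The hard step is closing the cycle with (iii), (v), or (vi) $\Rightarrow$ (iv), which are results of Z\"{o}schinger. For (iii) $\Rightarrow$ (iv), the strategy is to produce a finite submodule $N$ with $\Supp(M/N)\subset \Max R$, whereupon (iii) supplies artinianness; the delicate point is showing that finitely many generators suffice to kill every non-maximal associated prime of $M$. For (v) and (vi) $\Rightarrow$ (iv) one argues by contradiction, inductively building from a non-minimax $M$ an ascending (respectively descending) chain whose successive quotients repeatedly fail to be artinian (respectively finite), contradicting the chain condition.

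Finally, for (vii), the Matlis dual $(-)^{\vee} = \Hom_R(-, \E(R/\fm))$ is exact and interchanges finite and artinian modules; applied to the defining extension $0 \to N \to M \to M/N \to 0$ of a minimax module it produces another such extension, so the class of minimax modules is stable under Matlis duality. A five-lemma argument on the biduality map, together with the classical reflexivity of finite and artinian modules, then yields that $M$ is Matlis reflexive if and only if it is minimax.
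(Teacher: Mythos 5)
The paper itself does not prove this theorem: it is presented as a summary of known equivalences due to Zink \cite{Zi}, Z\"oschinger \cite{Zrmm,Zrrad} and Rudlof \cite{Ru}, so there is no author's argument to compare against. Within your sketch, the implications emanating from the defining condition (iv) are essentially sound: (iv)$\Rightarrow$(i),(ii) because finite Goldie dimension and finite socle are closed under extensions (for Goldie dimension the clean form of the argument is to take a complement $C$ of the finite submodule $L$ in $M$, so that $L\oplus C$ is essential in $M$ and $C$ embeds in the artinian $M/L$); (iv)$\Rightarrow$(v),(vi) by your stabilization-plus-modular-law argument, which is correct once one stabilizes $N_n\cap L$ in the ascending case and $N_n+L$ in the descending one; (i)$\Rightarrow$(ii)$\Rightarrow$(iii) because the socle of $M/N$ is essential when $\Ass_R(M/N)\subset\Max R$; and minimax$\Rightarrow$Matlis reflexive by exactness of Matlis duality and the five lemma applied to the biduality map.

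The converses needed to close the cycle are, however, only gestured at, and they are the actual content of the theorem. For (iii)$\Rightarrow$(iv) you propose to produce a finite $N\subset M$ with $\Supp_R(M/N)\subset\Max R$; but such an $N$ does not exist for an arbitrary module (e.g.\ $M=\mathbb{Z}^{(\mathbb{N})}$ over $\mathbb{Z}$: any finite $N$ lies in a finite free summand, so $M/N$ still surjects onto a free module of infinite rank and $(0)\in\Supp_R(M/N)$), so one must explain how hypothesis (iii) forces the existence of such an $N$, and that explanation is missing. For (v)$\Rightarrow$(iv) and (vi)$\Rightarrow$(iv) your plan ``argue by contradiction and build a chain'' is a restatement of the goal rather than a construction: given only that $M$ is not minimax, producing an ascending chain whose successive quotients are non-artinian infinitely often requires a real structural argument. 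Finally, for (vii) you prove only one direction; the converse, that a Matlis reflexive module over a complete local noetherian ring is minimax (the Enochs/Z\"oschinger theorem), is not addressed at all. Without these converses the equivalences do not follow.
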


An $R$--module $M$ has \emph{finite Goldie dimension} if $M$ contains
no infinite direct sum of submodules. For a commutative noetherian
ring this can be expressed in two other ways, namely that the
injective hull $\E(M)$ of $M$ decomposes as a finite direct sum of
indecomposable injective modules or that $M$ is an essential
extension of a finite submodule. In \ref{P:mmwl} we will give
another equivalent condition for a module to be minimax.

We prove in \ref{T:mmart}, that when $M$ is a finite $R$--module
such that the local cohomology modules $\LC^{i}_{\fa}(M)$ are
minimax modules for all $i\geq r$, where $r\geq 1$ then they must
be artinian.

An $R$--module $M$ is called  $\fa$--\emph{cofinite} if
$\Supp_R(M)\subset \V{(\fa)}$ and $\Ext^i_{R}(R/\fa,M)$ is finite
for each $i$. Hartshorne introduced this notion in \cite{Ha}, where
he gave a negative answer to a question by Grothendieck in
\cite{SGA2}, by giving an example of a local cohomology module which
is not $\fa$--cofinite. If an $R$--module $M$ with support in
$\V{(\fa)}$ is known to be a minimax module, then it suffices to
know that $0:_M{\fa}$ is finite in order to conclude that $M$ is
$\fa$--cofinite, \cite[Proposition 4.3]{Mel}. If we know that
$0:_M{\fa}$ is finite, then of course in general $M$ is neither
minimax nor $\fa$--cofinite, but if $M$ is assumed to be locally
minimax, then $M$ is $\fa$--cofinite and minimax as we show in
\ref{T:lmmcof}. This is applied to prove a Local-global Principle
for minimax modules in \ref{T:lgpmm}.

A prime ideal $\fp$ is said to be \emph{coassociated} to $M$ if
$\fp=\Ann_R({M/N})$ for some $N\subset M$ such that
 $M/N$ is artinian and is said to be \emph{attached} to
  $M$ if $\fp=\Ann_R({M/N})$ for some arbitrary submodule
 $N$ of $M$, (equivalently $\fp=\Ann_R({M/{\fp}M})$).
The set of these prime ideals are denoted by $\Coass_R(M)$
  and $\Att_R(M)$ respectively.
Thus $\Coass_R(M)\subset \Att_R(M)$ and the two sets are equal when $M$ is an
  artinian module.
An alternative description for coassociated primes is given by

$$
\Coass_R(M)=\underset{\fm\in
\Max{R}}{\bigcup}\Ass_R(\Hom_{R}(M,\E{(R/\fm)})).
$$
Thus when $(R,\fm)$ is a local ring the coassociated primes of an
$R$--module are just the associated primes of its
 Matlis dual.

$M$ is called \emph{coatomic} when each proper submodule $N$ of $M$
is contained in a maximal submodule $N^{\prime}$
 of $M$ (i.e. such that $M/N^{\prime}\cong R/\fm$ for some
$\fm \in \Max{R}$). This property can also
be expressed by $\Coass_R(M)\subset \Max{R}$ or equivalently that
any artinian homomorphic image of $M$ must have finite length. In
particular all finite modules are coatomic. Coatomic modules have
been studied by Z\"{o}schinger \cite{Zrko}.

A module $M$ which is minimax or coatomic has the property that the
localization $M_{\fp}$ is a finitely generated $R_{\fp}$--module for
each non-maximal prime ideal $\fp$. When $M$ is a minimax module
this follows from  condition (iv) of\ref{T:mmeq}.

We show in \ref{T:kofinlc} that if $M$ is finite and all local
cohomology modules $\LC^{i}_{\fa}(M)$ are coatomic for all $i<n$,
then they are actually finite in this range. In fact this is another
condition equivalent to Falting's Local-global Principle for the
finiteness of local cohomology modules, \cite[Theorem 9.6.1 and
Proposition 9.1.2]{BSh}. A vanishing theorem of Yoshida \cite{Yo} is
generalized in \ref{T:vanko1} and \ref{C:vanko2}.

For an $R$--module $M$ and an ideal $\fa$ of $R$, we let
$$
\cd{(\fa,M)}=\min\{n\geq 0\mid\LC^{i}_{\fa}(M)=0 \text{ for all } i>n \,\}
$$
and
$$
\q(\fa,M)=\min\{n\ge 0\mid\LC^{i}_{\fa}(M) \text{ is artinian for
all } i>n \,\}.
$$

We show that if $M$ is a coatomic $R$--module, then for any
$R$--module $N$ such that $\Supp_R(N)\subset\Supp_R(M)$,
 we have $\cd{(\fa,N)}\leq \cd{(\fa,M)}$.
This generalizes a result by Dibaei and Yassemi in \cite[Theorem 1.4]{DY3}
  who proved it when $M$ is finite.

\section{Artinianness of local cohomology modules}

\begin{lem}\label{L:lart}

Let $M$ be an $R$--module. The following statements are equivalent:
\begin{enumerate}
  \item[(i)]$M$ is an artinian $R$--module.
  \item[(ii)]$M_\fm$ is an artinian $R_\fm$--module for all
$\fm\in{\Max{R}}$ and $\Ass_R(M)$ is a finite set.
\end{enumerate}
 \end{lem}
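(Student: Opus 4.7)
The plan is to treat (i) $\Rightarrow$ (ii) as routine and concentrate on the converse. If $M$ is artinian, then localization at a maximal ideal preserves this property, and an artinian module over the noetherian ring $R$ has support a finite subset of $\Max R$, so $\Ass_R M \subseteq \Supp_R M$ is automatically finite; this disposes of (i) $\Rightarrow$ (ii).

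For (ii) $\Rightarrow$ (i), the first step is to prove $\Supp_R M \subseteq \Max R$. Given $\fp \in \Supp_R M$, I would pick any maximal ideal $\fm \supseteq \fp$; since $M_\fm$ is artinian over $R_\fm$, its support is confined to $\fm R_\fm$. But $M_\fp$ is a further localization of $M_\fm$, so if $\fp \ne \fm$ then $M_\fp = 0$, contradicting $\fp \in \Supp_R M$. Hence $\fp = \fm$ is maximal. The next step is to deduce $\Supp_R M \subseteq \Ass_R M$, which will force $\Supp_R M$ to be finite. For $\fp \in \Supp_R M$, choose $m \in M$ with $m/1 \ne 0$ in $M_\fp$, i.e.\ $\Ann_R(m) \subseteq \fp$. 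The cyclic submodule $Rm \cong R/\Ann_R(m)$ is finitely generated with $\fp$ in its support, so by the standard fact for finite modules there is $\fq \in \Ass_R(Rm)$ with $\fq \subseteq \fp$. Since $\fp$ is maximal, $\fp = \fq \in \Ass_R(Rm) \subseteq \Ass_R M$, so indeed $\Supp_R M \subseteq \Ass_R M$, a finite set by hypothesis.

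Finally, writing $\Supp_R M = \{\fm_1, \dots, \fm_n\}$, a finite collection of pairwise comaximal maximal ideals, I would invoke the standard decomposition $M = \bigoplus_{i=1}^{n} \G_{\fm_i}(M)$, where each summand $\G_{\fm_i}(M)$ is isomorphic to the localization $M_{\fm_i}$ and hence artinian by hypothesis. A finite direct sum of artinian modules is artinian, completing the proof. I expect the step deserving most care to be the passage from the local artinianness hypothesis together with finiteness of $\Ass_R M$ to finiteness of $\Supp_R M$, since this requires both the localization argument to force supports into $\Max R$ and the cyclic submodule trick to apply the standard finite-module fact that every prime in the support contains an associated prime; once $\Supp_R M$ is known to be a finite set of maximals, the component decomposition finishes everything.
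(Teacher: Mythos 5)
The paper states Lemma~\ref{L:lart} without proof, treating it as standard, so there is no in-text argument to compare against; I can only assess your proof on its own merits, and it is essentially correct. The structure is sound: (i)~$\Rightarrow$~(ii) is routine; for the converse, you correctly reduce to showing $\Supp_R M \subseteq \Max R$ via localization, then $\Supp_R M \subseteq \Ass_R M$ (hence finite) via the cyclic-submodule trick, and finally you invoke the decomposition $M = \bigoplus_{\fm \in \Supp_R M}\G_{\fm}(M)$ with $\G_{\fm}(M)\cong M_{\fm}$, valid whenever $\Supp_R M$ is a finite set of maximal ideals, to conclude.

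There is one imprecision worth flagging. In the second step you produce $\fq \in \Ass_R(Rm)$ with $\fq \subseteq \fp$ and then write ``Since $\fp$ is maximal, $\fp = \fq$.'' That implication is not valid as stated: $\fq \subseteq \fp$ with $\fp$ maximal does not by itself force equality (e.g.\ $(0)\subsetneq\fm$ in a domain). The correct justification is that $\fq \in \Ass_R(Rm)\subseteq \Supp_R(Rm)\subseteq\Supp_R(M)\subseteq\Max R$ (using your first step), so $\fq$ itself is maximal, and a maximal ideal contained in the proper ideal $\fp$ must equal it. Alternatively, you could bypass the cyclic-submodule argument entirely: since $\fp\in\Supp_R M\subseteq\Max R$, the nonzero artinian $R_\fp$--module $M_\fp$ has nonzero socle, so $\fp R_\fp\in\Ass_{R_\fp}(M_\fp)$, and hence $\fp\in\Ass_R(M)$ by the standard correspondence $\Ass_{R_\fp}(M_\fp)=\{\fq R_\fp : \fq\in\Ass_R(M),\ \fq\subseteq\fp\}$. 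Either fix closes the gap, and the rest of the argument, including that $\G_{\fm_i}(M)\cong M_{\fm_i}$ is artinian over $R$ (not just over $R_{\fm_i}$) because $R$-- and $R_{\fm_i}$--submodules of $M_{\fm_i}$ coincide, goes through.
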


A module $M$ is \emph{weakly laskerian} when each quotient $M/N$ has
just finitely many associated primes.  For a study of such modules,
see \cite{DM}. Every minimax
module is trivially weakly laskerian. The converse holds under the
additional condition that the module is locally minimax.

\begin{prop}\label{P:mmwl}
Let $M$ be an $R$--module. The following statements are equivalent:
\begin{enumerate}
  \item[(i)]$M$ is a minimax $R$--module.
  \item[(ii)]$M_\fm$ is a minimax $R_\fm$--module for all
    $\fm{\in}\Max{R}$ and $M$ is a weakly laskerian
  $R$--module.
\end{enumerate}
\end{prop}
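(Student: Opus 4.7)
The plan is to handle $(i) \Rightarrow (ii)$ quickly and to focus on the reverse implication. For the easy direction, if $N \subset M$ is a finite submodule with $M/N$ artinian, then $N_\fm$ is finite over $R_\fm$, and $(M/N)_\fm$ remains artinian (any artinian $R$--module is the direct sum of its localizations at the finitely many maximal ideals in its support, each of which is artinian over the corresponding local ring). Combined with the fact, already recorded in the paper, that every minimax module is weakly laskerian, this yields (ii).

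For $(ii) \Rightarrow (i)$, I would verify criterion (ii) of Theorem \ref{T:mmeq}, i.e.\ that $M/N$ has finite socle for every submodule $N$. The socle decomposes as
$$
\bigoplus_{\fm \in \Max{R}} (0 :_{M/N} \fm),
$$
and the summand at $\fm$ is nonzero precisely when $\fm \in \Ass_R(M/N)$ (the nontrivial inclusion uses that a nonzero element killed by a maximal ideal has annihilator equal to that ideal). Since $M$ is weakly laskerian, $\Ass_R(M/N)$ is finite, so only finitely many summands are nonzero, and it remains to bound each of them.

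To handle a fixed summand $(0 :_{M/N} \fm)$, I would exploit that it is annihilated by $\fm$, hence naturally isomorphic to its own localization at $\fm$. Since $\Hom_R(R/\fm,-)$ commutes with localization at $\fm$, one has
$$
(0 :_{M/N} \fm) \;\cong\; (0 :_{M_\fm/N_\fm}\fm R_\fm),
$$
which is the $R_\fm$--socle of the quotient $M_\fm/N_\fm$ of the minimax $R_\fm$--module $M_\fm$. Applying Theorem \ref{T:mmeq}(ii) over $R_\fm$, this socle is a finite-dimensional $R_\fm/\fm R_\fm$--vector space, hence of finite length over $R_\fm$ and therefore over $R$. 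Summing over the finitely many contributing $\fm$ shows that $M/N$ has finite socle, and Theorem \ref{T:mmeq} completes the argument.

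The main obstacle is less a single computation than the organization of the local-to-global passage: weak laskerian-ness is exactly what forces the indexing set of the socle decomposition to be finite, while local minimaxness of each $M_\fm$ controls the size of each summand. A secondary, but essential, care is needed to justify that a module annihilated by a maximal ideal $\fm$ is unaltered by localization at $\fm$, so that the identification with the $R_\fm$--socle is legitimate and Theorem \ref{T:mmeq} can be invoked over $R_\fm$.
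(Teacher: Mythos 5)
Your proof is correct, and it takes a genuinely different route from the paper's. The paper verifies criterion~(iii) of Theorem~\ref{T:mmeq}: given $N\subset M$ with $\Supp_R(M/N)\subset\Max R$, it shows $M/N$ is artinian by combining the weakly laskerian hypothesis (so $\Ass_R(M/N)$ is finite) with the local minimaxness (so each $(M/N)_\fm$ is artinian over $R_\fm$), and then invokes Lemma~\ref{L:lart} to globalize artinianness. You instead verify criterion~(ii) of Theorem~\ref{T:mmeq}, working with an \emph{arbitrary} submodule $N$ and decomposing $\mathrm{Soc}(M/N)=\bigoplus_\fm(0:_{M/N}\fm)$. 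Weak laskerian\-ness again gives finiteness of the index set (via $\Ass_R(M/N)$), while the identification $(0:_{M/N}\fm)\cong(0:_{M_\fm/N_\fm}\fm R_\fm)$ --- justified since $(0:_{M/N}\fm)$ is an $R/\fm$--module and $R/\fm$ is finitely presented --- lets local minimaxness bound each summand. The two proofs use the same two ingredients (finite $\Ass$ from weak laskerianness, local control from local minimaxness), but yours bypasses Lemma~\ref{L:lart} entirely and is therefore more self-contained; the paper's is slightly shorter once Lemma~\ref{L:lart} is available. Your treatment of the easy direction $(i)\Rightarrow(ii)$ is also fine, though the paper dismisses it as trivial.
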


\begin{proof}
The only nontrivial part is (ii)$\Rightarrow$ (i).

 We show that if $\Supp_R(M/N)\subset \Max{R}$ then $M/N$ is artinian.
By hypothesis $\Ass_R(M/N)$ is a finite set
  and consists of maximal ideals.
For each maximal ideal $\fm$,
the $R_\fm$--module $(M/N)_\fm$ is a minimax module
   with support at the maximal ideal of $R_\fm$.
Therefore by part (iii) of \ref{T:mmeq} $(M/N)_\fm$ is an artinian
    $R_\fm$--module for all $\fm\in{\Max{R}}$.
By \ref{L:lart}, $M/N$ is an artinian $R$--module.
\end{proof}

The following theorem is the main result of this section.

\begin{thm}\label{T:mmart}
Let $R$ be a noetherian ring, $\fa$ an ideal of $R$ and $M$ a finite
$R$--module. If $r \ge 1$ is an integer such that
$\LC^{i}_{\fa}(M)$ is a minimax module for all $i \ge r$, then
$\LC^{i}_{\fa}(M)$ is an artinian module for all $i \ge r$.
\end{thm}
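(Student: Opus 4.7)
The plan is to use Lemma~\ref{L:lart} to reduce to the local case and then induct on $\dim M$. Since a minimax module has only finitely many associated primes, the $\Ass_R$ condition in Lemma~\ref{L:lart} is automatic, so it suffices to prove that $(\LC^i_\fa(M))_\fm = \LC^i_{\fa R_\fm}(M_\fm)$ is an artinian $R_\fm$-module for each $\fm\in\Max{R}$. Localization preserves minimaxness over the localized ring (the finite part remains finite, and the artinian part remains artinian over $R_\fm$), so I may assume throughout that $(R,\fm)$ is local.

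Next I would induct on $d=\dim M$. When $d<r$, the vanishing $\LC^i_\fa(M)=0$ for $i>\dim M$ handles all $i\geq r$. For $d\geq r$, I may replace $M$ by $M/\Gamma_\fa(M)$, which affects neither the hypothesis nor the conclusion since both concern only $i\geq r\geq 1$; thus I may assume $\Gamma_\fa(M)=0$, and then $\fa$ contains an $M$-regular element $x$ with $\dim(M/xM)=d-1$. The short exact sequence $0\to M\xrightarrow{x}M\to M/xM\to 0$ yields, for each $i\geq r$, the short exact sequence
\[
0 \to \LC^i_\fa(M)/x\LC^i_\fa(M) \to \LC^i_\fa(M/xM) \to 0:_{\LC^{i+1}_\fa(M)} x \to 0.
\]
Since the class of minimax modules is a Serre subcategory, $\LC^i_\fa(M/xM)$ is minimax for all $i\geq r$. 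The inductive hypothesis applied to the finite module $M/xM$ of dimension $d-1$ then tells us that $\LC^i_\fa(M/xM)$ is artinian for $i\geq r$, whence $\LC^i_\fa(M)/x\LC^i_\fa(M)$, a submodule of an artinian module, is itself artinian.

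To finish, I would derive artinianness of $\LC^i_\fa(M)$ from Theorem~\ref{T:mmeq}(iii) by showing $\Supp_R(\LC^i_\fa(M))\subset\{\fm\}$. Suppose to the contrary that some non-maximal $\fp$ lies in this support. Because $\fp$ is non-maximal, the artinian part of the minimax module $\LC^i_\fa(M)$ localizes to zero at $\fp$, so $N:=(\LC^i_\fa(M))_\fp$ is a nonzero finitely generated $R_\fp$-module. On the other hand $N/xN=(\LC^i_\fa(M)/x\LC^i_\fa(M))_\fp=0$, since the latter is artinian over $R$ and thus supported only at maximal ideals of $R$. Because $\LC^i_\fa(M)$ is $\fa$-torsion, $\fa\subset\fp$, so $x$ lies in the maximal ideal of $R_\fp$; Nakayama's lemma then forces $N=0$, a contradiction. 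The main delicacy of the argument is precisely this last step: one must exploit the fact that minimax modules become finitely generated when localized at a non-maximal prime, which is what makes Nakayama's lemma applicable.
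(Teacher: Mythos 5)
Your proof is correct, but it follows a genuinely different and more self-contained route than the paper's. The paper localizes $\LC^i_\fa(M)$ at an arbitrary non-maximal prime $\fp$, notes that a minimax module becomes a \emph{finite} $R_\fp$-module there, and then invokes Yoshida's vanishing theorem \cite[Proposition 3.1]{Yo} (generalized as Theorem~\ref{T:vanko1} here) — which says that if $\LC^i_\fa(M)$ is finite for all $i\geq r\geq 1$ then it vanishes in that range — to conclude $\LC^i_\fa(M)_\fp=0$ for all $i\geq r$. Hence $\Supp_R(\LC^i_\fa(M))\subset\Max{R}$, and Theorem~\ref{T:mmeq}(iii) finishes the argument. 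You instead reprove the needed vanishing from scratch: you reduce to the local case via Lemma~\ref{L:lart}, observing that minimaxness makes the $\Ass_R$ condition automatic; then you induct on $\dim M$, kill $\Gamma_\fa(M)$, pass the minimax hypothesis to $M/xM$ through the short exact sequence attached to an $M$-regular $x\in\fa$ using the Serre-subcategory property, and finish with a Nakayama argument at a hypothetical non-maximal prime in the support. The pivotal observation — that a minimax module is finitely generated once localized at a non-maximal prime, so that Nakayama applies — is common to both arguments; the difference is that the paper outsources the inductive mechanism to Yoshida's result, while you make it explicit, gaining transparency at the cost of some dimension-shifting bookkeeping.
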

\begin{proof}

Suppose $\fp$ is a nonmaximal prime ideal of $R$. Then
$\LC^{i}_{\fa}(M)_\fp \cong \LC^{i}_{{\fa}R_{\fp}}(M_\fp)$ is a
finite $R_\fp$--module for all $i\geq r$, since as we remarked in the
introduction, when we localize at nonmaximal prime ideals, we obtain
finitely generated modules.
Therefore from \cite[Proposition 3.1]{Yo} we get that
$\LC^{i}_{\fa}(M)_\fp=0$ for all $i\ge r$.
Hence $\Supp_R(\LC^{i}_{\fa}(M))\subset{\Max{R}}$ for all $i\ge r$.
By the condition (iii) of \ref{T:mmeq}, the modules
$\LC^{i}_{\fa}(M)$ are artinian
for all $i \geq r$.
\end{proof}


\begin{cor}\label{C:mmart}

Let $\fa$ an ideal of $R$ and $M$ a finite $R$--module. If
$q=\q(\fa,M)> 0$, then the module $\LC^{q}_{\fa}(M)$ is not
minimax, in particular it is not finite.
\end{cor}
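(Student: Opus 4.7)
The plan is to deduce this as a direct contradiction with Theorem \ref{T:mmart}. By the definition of $q = \mathrm{q}(\mathfrak{a},M)$ as a minimum, the local cohomology module $\LC^{i}_{\fa}(M)$ is artinian for every $i > q$, while $\LC^{q}_{\fa}(M)$ itself fails to be artinian (otherwise $q$ could be replaced by $q-1$, which is still $\geq 0$).

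Next I would argue by contradiction: suppose $\LC^{q}_{\fa}(M)$ is minimax. Since every artinian module is minimax (noted in the introduction, where it is recorded that minimax modules form a Serre subcategory containing both the finite and the artinian modules), the modules $\LC^{i}_{\fa}(M)$ are minimax for all $i \geq q$. Because $q \geq 1$ by hypothesis, Theorem \ref{T:mmart} applies with $r = q$ and forces $\LC^{i}_{\fa}(M)$ to be artinian for all $i \geq q$, in particular for $i=q$. This contradicts the non-artinianness of $\LC^{q}_{\fa}(M)$, so $\LC^{q}_{\fa}(M)$ cannot be minimax.

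The ``in particular'' clause is then automatic: a finite module is minimax, so if $\LC^{q}_{\fa}(M)$ were finite it would be minimax, which we have just ruled out. There is no real obstacle here beyond invoking Theorem \ref{T:mmart} correctly and checking the definitional unfolding of $\mathrm{q}(\mathfrak{a},M)$; the bound $q \geq 1$ is precisely what is needed for the hypothesis $r \geq 1$ of that theorem to be available.
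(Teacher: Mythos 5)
Your argument is correct and is exactly the intended deduction from Theorem \ref{T:mmart}; the paper states this corollary without proof precisely because it follows by the contradiction you spell out (minimaxness at degree $q$ plus artinianness above $q$ would force artinianness at $q$, contradicting the minimality defining $\mathrm{q}(\fa,M)$). The "in particular" clause via finite $\Rightarrow$ minimax is handled as intended.
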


\begin{prop}\label{P:mmcsoc}
Let $M$ be a minimax module and $\fa$ an ideal of $R$. If $M$ is
$\fa$--cofinite and socle-free, then there is $l$ such that
$M=0:_M{\fa^l}$ and $M$ is finite.
\end{prop}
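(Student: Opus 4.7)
The plan is to first show that $M$ is $\fa$-torsion so that $M=\bigcup_n(0:_M\fa^n)$, then combine the minimax condition with the socle-free hypothesis to force this ascending chain to terminate, and finally use $\fa$-cofiniteness to promote the resulting equality $\fa^l M=0$ to the finiteness of $M$.

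For the first step, $\fa$-cofiniteness gives $\Supp_R(M)\subset\V(\fa)$; since every element of $M$ generates a finite submodule, each element is killed by some power of $\fa$, whence $M=\bigcup_{n\ge 0}(0:_M\fa^n)$. Applying condition~(v) of \ref{T:mmeq} to the ascending chain $\{0:_M\fa^n\}$, there is an index $l$ beyond which the successive quotients $Q_n:=(0:_M\fa^{n+1})/(0:_M\fa^n)$ are artinian.

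The key observation, and the main hurdle, is that each $Q_n$ inherits the socle-free property from $M$. Indeed, given a maximal ideal $\fm$ and $\bar x\in Q_n$ with $\fm\bar x=0$, the relation $\fm x\subset 0:_M\fa^n$ rearranges as $\fa^n x\subset 0:_M\fm$, which vanishes by the socle-free hypothesis. Hence $x\in 0:_M\fa^n$ and $\bar x=0$. Since an artinian socle-free module must be zero, the chain stabilizes at $0:_M\fa^l$; combining with $\fa$-torsion, $M=0:_M\fa^l$, i.e.\ $\fa^l M=0$.

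It remains to extract finiteness from $\fa^l M=0$. The plan is to use the filtration
\[
0\subset 0:_M\fa\subset 0:_M\fa^2\subset\cdots\subset 0:_M\fa^l=M,
\]
and to embed each successive quotient $(0:_M\fa^{i+1})/(0:_M\fa^i)$ into $\Hom_R(\fa^i/\fa^{i+1},\,0:_M\fa)$ via the map $x\mapsto(a\mapsto ax)$ (well-defined because $\fa(ax)\subset\fa^{i+1}x=0$, and with kernel exactly $0:_M\fa^i$). The target is finite because $\fa^i/\fa^{i+1}$ is a finitely generated $R$-module and $0:_M\fa=\Hom_R(R/\fa,M)$ is finite by $\fa$-cofiniteness. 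A finite filtration with finite successive quotients yields that $M$ is finite.
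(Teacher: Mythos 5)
Your proposal is correct and follows the same skeleton as the paper's proof: represent $M$ as the union of the ascending chain $0:_M\fa\subset 0:_M\fa^2\subset\cdots$, invoke condition (v) of Theorem~\ref{T:mmeq} to make the successive quotients $Q_n$ artinian past some index $l$, and then use socle-freeness to force those artinian quotients to vanish. Where you diverge is in the way you establish that $Q_n$ is socle-free. The paper first shows that $M_n=M/(0:_M\fa^n)$ is socle-free by the embedding $M_n\hookrightarrow M^r$, $m\mapsto (c_i m)$ for generators $c_1,\dots,c_r$ of $\fa^n$, and then observes that $Q_n\subset M_n$ inherits socle-freeness; you instead give a direct element computation, rearranging $\fm x\subset 0:_M\fa^n$ to $\fa^n x\subset 0:_M\fm=0$. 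Both work; yours is a bit more elementary, but the paper's embedding trick has the side benefit of showing $\Ass_R(M_n)\subset\Ass_R(M)$, a fact the authors reuse explicitly in the proof of Theorem~\ref{T:lmmcof}. You also spell out two things the paper leaves implicit: the justification that $\Supp_R(M)\subset\V(\fa)$ forces $M=\bigcup_n 0:_M\fa^n$, and the final deduction of finiteness from $\fa^l M=0$ via the filtration with quotients embedding in $\Hom_R(\fa^i/\fa^{i+1},\,0:_M\fa)$; the paper's proof actually stops at $M=0:_M\fa^l$ without addressing the finiteness claim, so your last paragraph is a welcome completion.
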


\begin{proof}
Given $n$, let $\fa^n=(c_1,\dots,c_r)$. We define $h:M\to M^r$ by
$h(m)=({c_i}m)_{i=1}^r$. Clearly $\Ker{h}=0:_M{\fa^n}$, so the
module $M_n=M/(0:_M{\fa^n})$ is isomorphic to a submodule of $M^r$.
In particular $M_n$ is socle-free. Consider the increasing sequence
$0:_M{\fa}\subset 0:_M{\fa^2}\subset\dots$ of
 submodules of $M$, whose union is equal to $M$.
Since $M$ is minimax, \ref{T:mmeq} (v) implies that there is $l$
 such that $0:_M{\fa^{n+1}}/(0:_M{\fa^n})$ is artinian for all
$n\geq l$. But $M/(0:_M{\fa^n})$ is socle-free.
 Hence $0:_M{\fa^{n+1}}=0:_M{\fa^n}$ for all $n\geq l$,
and therefore $M=0:_M{\fa^l}$.
\end{proof}

The following theorem generalizes \cite[Proposition 4.3]{Mel}.

\begin{thm}\label{T:lmmcof}
 Let $M$ be an $R$--module such that
$\Supp_R(M)\subset \V{(\fa)}$ and $M$ is locally minimax.
If $0:_M{\fa}$ is finite, then $M$ is an $\fa$--cofinite minimax
module. In particular this is the case, if there exists an element
$x\in \fa$ such that $0:_M{x}$ is $\fa$--cofinite.
\end{thm}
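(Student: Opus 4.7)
The plan is to dispense with the ``in particular'' clause as a consequence of the main statement, and then to establish the main statement by globalizing, via Proposition \ref{P:mmwl}, the local information one gets from applying \cite[Proposition 4.3]{Mel} to each $M_\fm$. First, $\Supp_R(M) \subset \V(\fa)$ combined with the noetherianness of $R$ makes $M$ an $\fa$-torsion module, so that $\Ass_R(M) = \Ass_R(0:_M\fa)$ is already a finite set. For the reduction of the ``in particular'' clause: if $0:_M x$ is $\fa$-cofinite for some $x \in \fa$, then
$$0:_M \fa \;=\; \Hom_R(R/\fa,\, 0:_M x)$$
is finite, which is the hypothesis of the main claim.

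For the main claim, localize at each maximal ideal $\fm$: $M_\fm$ is a minimax $R_\fm$-module with support in $\V(\fa R_\fm)$ and with $(0:_M\fa)_\fm$ finite, so \cite[Proposition 4.3]{Mel} makes each $M_\fm$ an $\fa R_\fm$-cofinite minimax module. Since $M$ is locally minimax, Proposition \ref{P:mmwl} reduces the global minimaxness of $M$ to showing that $M$ is weakly laskerian, i.e.\ that $\Ass_R(M/N)$ is finite for every submodule $N \subset M$. Once minimaxness is in hand, a single further application of \cite[Proposition 4.3]{Mel} globally to $M$ yields $\fa$-cofiniteness, completing the proof.

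The main obstacle is the weakly laskerian property. For a submodule $N \subset M$, applying $\Hom_R(R/\fa,-)$ to $0 \to N \to M \to M/N \to 0$ produces the exact sequence
$$0 \to (0:_M\fa)/(0:_N\fa) \to 0:_{M/N}\fa \to \Ext^1_R(R/\fa, N),$$
and since $M/N$ is $\fa$-torsion, $\Ass_R(M/N) = \Ass_R(0:_{M/N}\fa)$. The first term is a quotient of the finite module $0:_M\fa$ and hence has finite $\Ass_R$, so the task reduces to controlling the associated primes of the image in $\Ext^1_R(R/\fa, N)$. I would handle this by noting that $N$ itself inherits all hypotheses of the theorem (it is locally minimax, has support in $\V(\fa)$, and $0:_N\fa \subset 0:_M\fa$ is finite), and then arguing by induction, for instance on $\dim R/\fa$; the base case $\dim R/\fa = 0$ forces $\Supp_R(M) \subset \Max R$, so Lemma \ref{L:lart} together with condition (iii) of Theorem \ref{T:mmeq} immediately give that $M$ is artinian, and the inductive step then feeds the finiteness of $\Ext^1_R(R/\fa,N)$ back through the displayed four-term sequence.
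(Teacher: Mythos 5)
Your plan diverges from the paper's proof, but the divergence is not merely stylistic: the step where your argument would need to close is left open, and I do not see how to close it as described.

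The reduction of the ``in particular'' clause to the main claim is correct (indeed $\Hom_R(R/\fa, 0:_M x) = 0:_M\fa$, which is therefore finite). The reduction of $\fa$-cofiniteness to minimaxness via \cite[Proposition 4.3]{Mel} once minimaxness is known is also correct, and the observation that \ref{P:mmwl} reduces minimaxness to the weakly laskerian property is a reasonable plan. Your base case for the proposed induction on $\dim R/\fa$ is sound. The gap is the inductive step. You want to conclude that $\Ext^1_R(R/\fa, N)$, or at least the relevant image inside it, has finitely many associated primes, and you propose to do so by noting that $N$ ``inherits all hypotheses'' and then invoking the theorem for $N$. But $N$ is a submodule of $M$ over the \emph{same} ring $R$ with respect to the \emph{same} ideal $\fa$: the quantity $\dim R/\fa$ has not decreased, so the inductive hypothesis on $\dim R/\fa$ does not apply to $N$. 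What you have written is circular rather than an induction. You would need either a different parameter that genuinely decreases, or a different control on $\Ass_R$ of that image; neither is supplied, and it is not evident how to supply one along these lines.

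The paper sidesteps the weakly laskerian / \ref{P:mmwl} route entirely. It first quotients out the sum $L$ of artinian submodules (showing $L$ is artinian and $\fa$-cofinite using \cite[Proposition 4.1]{Mel} and that $0:_{M/L}\fa$ remains finite), reducing to the case where $M$ is socle-free. It then applies \ref{P:mmcsoc} locally at each maximal ideal $\fm$ to produce $n$ with $(M/(0:_M\fa^n))_\fm = 0$, and observes that each $M_n = M/(0:_M\fa^n)$ embeds in some $M^r$, whence $\Ass_R(M_n)\subset\Ass_R(0:_M\fa)$ is finite and $\Supp_R(M_n)$ is closed. A quasi-compactness argument on $\Spec R$ with the increasing open cover $\{X\setminus\Supp_R(M_n)\}$ then yields a single $n$ with $M = 0:_M\fa^n$, which is finite. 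The crucial device you are missing is this ``embed $M_n$ in $M^r$ to control $\Ass_R(M_n)$, then use quasi-compactness'' step: it gives the global statement directly without ever needing $M$ to be weakly laskerian.
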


\begin{proof}
Let $L$ be the sum of the artinian submodules of $M$. Then
$0:_L{\fa}$ is finite and therefore has finite length.
 Hence by \cite[Proposition 4.1]{Mel} $L$ is artinian and $\fa$--cofinite.

The module $\overline{M}=M/L$ is locally minimax and furthermore it
is socle-free. From the exactness of
$$
0\to 0:_L{\fa}\to 0:_M{\fa}\to 0:_{\overline{M}}{\fa}\to \Ext^1_{R}(R/\fa,L),
$$
we get that $0:_{\overline{M}}{\fa}$
is finite. We may therefore replace $M$ by $\overline{M}$, and
assume that $M$ is socle-free.

Let $\fm$ be any maximal ideal. Then $M_{\fm}$ is a socle-free
minimax module over $R_{\fm}$, in fact it is ${\fa}R_\fm$--cofinite
by \cite[Proposition 4.3]{Mel}. We are therefore able to apply
proposition \ref{P:mmcsoc}, so there is $n$ such that $(M_n)_\fm=0$
where $M_n=M/(0:_M{\fa}^n)$. Since as noted in the proof of
\ref{P:mmcsoc} for each $n$, there is $r$ such that $M_n$ is
isomorphic to a submodule of $M^r$, $\Ass_R(M_n)\subset
\Ass_R(M)=\Ass_R(0:_M{\fa})$ and $\Ass_R(M_n)$ is therefore finite.
Consequently $\Supp_R(M_n)$ must be a closed subset of $X=\Spec R$.
Therefore
$U_n=X\setminus{\Supp_R(M_n)}$ is an increasing sequence of open
subsets of $X$. Since for each maximal ideal $\fm$, there is $n$
such that $(M_n)_\fm=0$ i.e. $\fm \in{U_n}$,
$X=\cup_{n=0}^\infty{U_n}$. By the quasi-compactness of $X$, we get
that $X=U_n$ for some $n$. Hence $M=0:_M{\fa}^n$ which is finite.
\end{proof}

The following corollary describes a relation between the properties
of cofiniteness and minimaxness for local cohomology.

\begin{cor}\label{C:lmmcof}
Let $n$ be a non-negative integer and $M$ a finite $R$--module
\begin{enumerate}
\item[(a)]
If $\LC^{i}_{\fa}(M)$ is $\fa$--cofinite for all $i<n$ and
$\LC^{t}_{\fa}(M)$ is a locally minimax module, then it is also
$\fa$--cofinite minimax.
\item[(b)] If $\LC^{i}_{\fa}(M)$ is
$\fa$--cofinite for all $i<n$ and a locally minimax
 module for all $i$ $\geq$ $n$,
then $\LC^{i}_{\fa}(M)$ is $\fa$--cofinite for all $i$.
\end{enumerate}
\end{cor}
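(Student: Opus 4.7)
The plan is to reduce everything to Theorem \ref{T:lmmcof}: once one knows $\Supp_R(\LC^{i}_{\fa}(M))\subset \V(\fa)$ (automatic), that the module is locally minimax (hypothesis), and that $0:_{\LC^{i}_{\fa}(M)}{\fa}$ is finite, cofiniteness and the minimax property drop out. The whole game is therefore to establish finiteness of $0:_{\LC^{i}_{\fa}(M)}{\fa}=\Hom_R(R/\fa,\LC^{i}_{\fa}(M))$ under the cofiniteness of the lower local cohomology modules.

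For part (a) (read $t=n$, which is the only case one needs) I would invoke the Grothendieck spectral sequence
$$E_2^{p,q}=\Ext^p_R(R/\fa,\LC^{q}_{\fa}(M))\Longrightarrow \Ext^{p+q}_R(R/\fa,M).$$
Since $M$ is finite, every abutment $\Ext^{p+q}_R(R/\fa,M)$ is finite, and since $\LC^{q}_{\fa}(M)$ is $\fa$--cofinite for $q<n$, the entries $E_2^{p,q}$ with $q<n$ are all finite. A standard chase through the filtration (the same argument used in the classical finiteness-of-$\Hom(R/\fa,-)$ results in work of Delfino--Marley and of the second author) then gives that $E_\infty^{0,n}$, and hence the submodule $E_2^{0,n}=\Hom_R(R/\fa,\LC^{n}_{\fa}(M))$ which it filters, is finite. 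Theorem \ref{T:lmmcof} now applies to $\LC^{n}_{\fa}(M)$ and yields that it is $\fa$--cofinite and minimax.

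Part (b) is then a straightforward induction on $i\ge n$. For $i<n$ there is nothing to do. The base case $i=n$ is part (a). For the inductive step, if $\LC^{j}_{\fa}(M)$ is $\fa$--cofinite for every $j<i$, applying part (a) with $n$ replaced by $i$ (noting that $\LC^{i}_{\fa}(M)$ is locally minimax by hypothesis) shows $\LC^{i}_{\fa}(M)$ is $\fa$--cofinite, closing the induction.

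The main obstacle is the spectral sequence step producing finiteness of $\Hom_R(R/\fa,\LC^{n}_{\fa}(M))$; everything else is bookkeeping. A fallback, if one wants to avoid the spectral sequence language entirely, is to argue by induction on $n$ using a short exact sequence $0\to \G_\fa(M)\to M\to \overline M\to 0$ and the resulting long exact sequence of $\Ext$-modules, reducing to the $\fa$--torsion--free case where $\Hom_R(R/\fa,\LC^{n}_{\fa}(M))\cong \Ext^n_R(R/\fa,\overline M)$ is evidently finite.
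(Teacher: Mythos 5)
Your proposal follows essentially the same route as the paper: reduce to Theorem~\ref{T:lmmcof}, for which the crucial input is finiteness of $\Hom_R(R/\fa,\LC^{n}_{\fa}(M))$, then dispose of (b) by induction. The paper simply cites \cite[Theorem 2.1]{DY2} for that finiteness, whereas you reproduce the spectral-sequence argument that underlies it, so the difference is only whether the citation is unpacked.

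Two small cautions. First, the filtration direction in your phrasing is reversed: $E_\infty^{0,n}$ is the bottom submodule of a descending filtration of $E_2^{0,n}$, not something that $E_2^{0,n}$ sits inside; finiteness of $E_2^{0,n}$ comes from finiteness of $E_\infty^{0,n}$ (a quotient of the finite abutment $\Ext^n_R(R/\fa,M)$) together with finiteness of each successive quotient $E_r^{0,n}/E_{r+1}^{0,n}$, which embeds in $E_r^{r,n-r+1}$ with $n-r+1<n$. Second, the ``fallback'' at the end is not sound as stated: for $\fa$--torsion--free $\overline M$ there is in general no isomorphism $\Hom_R(R/\fa,\LC^{n}_{\fa}(\overline M))\cong \Ext^n_R(R/\fa,\overline M)$, so that sketch should not be relied upon. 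The main spectral-sequence line, however, is correct and matches the paper's argument.
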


\begin{proof}
 It is enough to prove that $\Hom_{R}(R/\fa,\LC^{n}_{\fa}(M))$ is
finite by \ref{T:lmmcof} and this is immediate by use of
\cite[Theorem 2.1]{DY2}

(b) Use part (a).
\end{proof}
The following theorem, which is one of our main results shows that
the Local-global Principle is valid for
 minimax local cohomology modules.

\begin{thm}\label{T:lgpmm}
 Let $\fa$ be an ideal of $R$, $M$ a finite $R$--module and
$t$ a non-negative integer. The following statements
  are equivalent:
\begin{enumerate}
 \item[(i)] $\LC^{i}_{\fa}(M)$ is a minimax $R$--module for all $i\le t$.
 \item[(ii)] $\LC^{i}_{\fa}(M)$ is an
   $\fa$--cofinite minimax $R$--module for all $i\le t$.
  \item[(iii)] $\LC^{i}_{\fa}(M)_{\fm}$ is a minimax
    $R_{\fm}$--module for all $\fm\in \Max{R}$ and for all $i\le t$.
\end{enumerate}
\end{thm}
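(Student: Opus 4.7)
The plan is to prove the cycle (ii)$\Rightarrow$(i)$\Rightarrow$(iii)$\Rightarrow$(ii). The implication (ii)$\Rightarrow$(i) is immediate, since every $\fa$-cofinite minimax module is in particular minimax. For (i)$\Rightarrow$(iii), I would use the fact that local cohomology commutes with localization, so $\LC^{i}_{\fa}(M)_{\fm}\cong \LC^{i}_{\fa R_{\fm}}(M_{\fm})$, together with the observation that minimaxness is stable under localization at a maximal ideal: if $N\subseteq \LC^{i}_{\fa}(M)$ is a finite submodule with artinian quotient $A$, then $N_{\fm}$ is finite over $R_{\fm}$ and $A_{\fm}$ is artinian over $R_{\fm}$ (it is just the $\fm$-primary component of the artinian $R$-module $A$, and this is artinian equally as an $R$- or $R_{\fm}$-module).

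The heart of the argument is (iii)$\Rightarrow$(ii), which I would prove by induction on $t$. The base case $t=0$ is immediate because $\LC^{0}_{\fa}(M)=\G_{\fa}(M)$ is a submodule of the finite module $M$, hence finite and therefore trivially $\fa$-cofinite and minimax. For the inductive step, assume the statement for $t-1$, so that $\LC^{i}_{\fa}(M)$ is $\fa$-cofinite minimax for every $i<t$. By hypothesis (iii), $\LC^{t}_{\fa}(M)$ is locally minimax, and of course its support lies in $\V(\fa)$. Corollary \ref{C:lmmcof}(a), applied with $n=t$, then yields that $\LC^{t}_{\fa}(M)$ itself is $\fa$-cofinite minimax, closing the induction.

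The main obstacle is precisely this inductive step, and the reason it goes through is the combination of two previously established facts: first, \cite[Theorem 2.1]{DY2} guarantees that once $\LC^{i}_{\fa}(M)$ is $\fa$-cofinite for every $i<t$, the module $\Hom_{R}(R/\fa,\LC^{t}_{\fa}(M))=0:_{\LC^{t}_{\fa}(M)}\fa$ is finite; second, Theorem \ref{T:lmmcof} then upgrades local minimaxness plus finiteness of $0:_{M}\fa$ to genuine $\fa$-cofinite minimaxness. These two ingredients are already packaged together in Corollary \ref{C:lmmcof}(a), so in the end the inductive step is a single invocation of that corollary, and no further work beyond the bookkeeping of the induction is required.
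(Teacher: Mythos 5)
Your proposal is correct, and the overall architecture (reduce to (iii)$\Rightarrow$(ii) and run an induction on $t$) matches the paper. The difference lies entirely in how the inductive step for $\LC^{t}_{\fa}(M)$ is closed. You invoke Corollary~\ref{C:lmmcof}(a), which establishes finiteness of $\Hom_{R}(R/\fa,\LC^{t}_{\fa}(M))$ by citing the external result \cite[Theorem 2.1]{DY2} before feeding into Theorem~\ref{T:lmmcof}. The paper instead keeps the argument self-contained: after reducing to $\G_{\fa}(M)=0$ it picks $x\in\fa$ regular on $M$, localizes the long exact sequence of $0\to M\xrightarrow{x} M\to M/xM\to 0$ to see that $M/xM$ also satisfies hypothesis~(iii) with parameter $t-1$, applies the induction hypothesis to $M/xM$, and then reads off from the long exact sequence that $0:_{\LC^{t}_{\fa}(M)}x$ is $\fa$-cofinite, at which point the ``in particular'' clause of Theorem~\ref{T:lmmcof} finishes. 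Your route is shorter because it reuses Corollary~\ref{C:lmmcof}(a) as a black box; the paper's route re-derives that content on the spot, trading a bit more work for avoiding the dependence on \cite[Theorem 2.1]{DY2} at this point. Both are valid, and your treatment of (ii)$\Rightarrow$(i) (trivial) and (i)$\Rightarrow$(iii) (minimaxness localizes) is fine — the paper simply declares these non-trivial parts without comment, while you supply the short justification.
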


\begin{proof}
The only non-trivial part is the implication
$(iii)\Rightarrow (ii)$. We prove this by induction on $t$. When
$t=0$ there is nothing to prove. Suppose $t>0$ and the case $t-1$ is
settled. So we may assume that $\G_{\fa}(M)=0$. Thus there exists
$x\in {\fa}$ such that
$0\to M\overset{x}\to M\to M/{x}M\to 0$
is exact. We get  the exact sequence
$$
\LC^{i}_{\fa}(M)_{\fm}\to \LC^{i}_{\fa}(M/{x}M)_{\fm}\to
\LC^{i+1}_{\fa}(M)_{\fm}
$$
It follows that $\LC^i_\fa (M/{xM})_\fm$ is a minimax
$R_\fm$--module for $i\le{t-1}$. By the induction hypothesis
$\LC^i_{\fa}(M/{xM})$ is an $\fa$--cofinite $R$--module for $i\le
t$. It follows that $0\underset{\LC^t_\fa(M)}{:} x$ is
$\fa$--cofinite and from \ref{T:lmmcof} we conclude that
$\LC^t_\fm(M)$ is $\fa$--cofinite minimax.
\end{proof}



\begin{exam}
Suppose the set $\Omega$ of maximal ideals of $R$ is infinite. Then
the module $\oplus_{\fm\in\Omega}R/\fm$  is locally a minimax
module, but it is not a minimax module.
\end{exam}

\section{Finiteness, vanishing and non vanishing}

\begin{lem}\label{L:fin}
 Let $R$ be a noetherian ring, $\fa$ an ideal of $R$, $M$ an $R$--module.
Then $\fa M$ is finite if and only
 if $M/(0:_M{\fa})$ is finite.
\end{lem}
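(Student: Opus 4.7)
The plan is to exploit the fact that $R$ is noetherian, so $\fa$ is finitely generated; fix once and for all generators $\fa=(x_1,\dots,x_r)$. This will let me construct two natural $R$-linear maps, one for each direction, that transport finite generation between the modules $\fa M$ and $M/(0:_M\fa)$.

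For the implication ``$\fa M$ finite $\Rightarrow$ $M/(0:_M\fa)$ finite,'' I would consider the homomorphism
$\phi:M\to (\fa M)^{r}$ defined by $\phi(m)=(x_{1}m,\dots,x_{r}m)$.
Its image clearly lies in $(\fa M)^{r}$, and its kernel is $\bigcap_{i=1}^{r}(0:_{M}x_{i})=0:_{M}\fa$. Thus $M/(0:_{M}\fa)$ embeds into $(\fa M)^{r}$. Assuming $\fa M$ is finite, $(\fa M)^{r}$ is finite, and since $R$ is noetherian the submodule $M/(0:_{M}\fa)$ is finite as well.

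For the converse direction, I would define
$\psi:(M/(0:_{M}\fa))^{r}\to \fa M$ by $(\bar m_{1},\dots,\bar m_{r})\mapsto \sum_{i=1}^{r} x_{i}m_{i}$.
This is well defined because $x_{i}$ annihilates $0:_{M}\fa \subseteq 0:_{M}x_{i}$, and it is surjective because the generators $x_{1},\dots,x_{r}$ of $\fa$ force every element of $\fa M$ to have the form $\sum x_{i}m_{i}$. Hence if $M/(0:_{M}\fa)$ is finite then its $r$-fold direct sum is finite and $\fa M$, as a quotient, is finite.

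There is essentially no obstacle here: the lemma is a bookkeeping statement, and the only ingredients beyond the finite generation of $\fa$ are the closure of the class of finite modules under finite direct sums, submodules (using that $R$ is noetherian), and quotients. The one place to stay alert is to record why $\phi$ and $\psi$ respectively land in and emanate from the intended modules; both points are immediate from $x_{i}\in\fa$ and $\fa=(x_{1},\dots,x_{r})$.
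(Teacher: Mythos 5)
Your proof is correct and follows essentially the same approach as the paper: both directions use the same maps $m\mapsto (x_im)_i$ into $(\fa M)^r$ and $(m_i)_i\mapsto\sum x_im_i$ onto $\fa M$, with the only cosmetic difference being that you pass to the induced map on $(M/(0:_M\fa))^r$ where the paper works with $M^r$ and notes $(0:_M\fa)^r\subset\Ker g$.
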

\begin{proof}
 $( \Rightarrow)$ Suppose $\fa=(a_1,\dots,a_n)$ and define
$f: M\to (\fa M)^n$ by $f(m)=(a_i m)_{i=1}^n$. Since  $\ker{f}=
0:_M{\fa}$, the module $M/(0:_M{\fa})$ is isomorphic to a submodule
of $(\fa M)^n$.

$(\Leftarrow)$ Define a homomorphism $g:M^n\to \fa M$ by
$g((m_i)_{i=1}^n)=\sum\limits_{i=1}^n{a_i m_i}$. Then $g$ is
surjective and $(0:_M{\fa})^n\subset \Ker{g}$, so $\fa M$ is a
homomorphic image of $(M/(0:_M{\fa}))^n$.

By the way, when $\fa={x}R$ is a  principal ideal,
the modules ${x}M$ and $M/(0:_M{x})$
are in fact isomorphic.
\end{proof}

\begin{thm}[Nonvanishing for coatomic modules]\label{T:nonvan}
 Let $(R,\fm)$ be
a noetherian local ring. If $M$ is a
 nonzero coatomic $R$--module of dimension $n$, then $\LC^{n}_{\fm}(M) \neq 0$.
\end{thm}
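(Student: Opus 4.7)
The plan is to induct on $n=\dim M$ and reduce the inductive step to Grothendieck's classical non-vanishing theorem, applied to a suitable finite submodule of dimension $n$.

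For the base case $n=0$, $\Supp_R(M)\subseteq\{\fm\}$, so every element of $M$ is annihilated by a power of $\fm$ and hence $\LC^0_\fm(M)=\G_\fm(M)=M\ne 0$. For $n\ge 1$, first produce a finite submodule $N\subseteq M$ with $\dim N=n$: pick $\fp\in\Supp_R(M)$ with $\dim R/\fp=n$; since $n\ge 1$, $\fp\ne\fm$, and by the observation in the introduction the coatomic hypothesis forces $M_\fp$ to be a nonzero finitely generated $R_\fp$-module. Take $m\in M$ whose image in $M_\fp$ is nonzero and set $N:=Rm$; a quick check using $\fp\in\Supp_R(N)$ for the lower bound and $N\subseteq M$ for the upper bound yields $\dim N=n$. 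Grothendieck's non-vanishing and vanishing theorems applied to the finite module $N$ give $\LC^n_\fm(N)\ne 0$ and $\LC^{n+1}_\fm(N)=0$.

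The short exact sequence $0\to N\to M\to M/N\to 0$ now yields
\[
\LC^{n-1}_\fm(M/N)\xrightarrow{\partial}\LC^n_\fm(N)\to \LC^n_\fm(M)\to\LC^n_\fm(M/N)\to 0.
\]
If $\LC^n_\fm(M/N)\ne 0$ we are done; otherwise one must prevent the connecting map $\partial$ from being surjective, and this is the main obstacle, because $M/N$ is only coatomic, not finite. The plan to resolve this is to exploit that $M$ is the directed union of its finite submodules containing $N$ and that $\LC^n_\fm$ commutes with direct limits: writing $\LC^n_\fm(M)=\varinjlim \LC^n_\fm(N_\alpha)$ with $N_\alpha\supseteq N$ finite of dimension $n$, one tracks a fixed nonzero element of $\LC^n_\fm(N)$ through the transition maps and uses the Artinianness of each $\LC^n_\fm(N_\alpha)$ together with the coatomic structure on $M$ to force it to survive the limit. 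A more conceptual alternative, available once the cohomological-dimension inequality $\cd(\fm,L)\le\cd(\fm,M)$ for coatomic $M$ with $\Supp_R(L)\subseteq\Supp_R(M)$ (mentioned as forthcoming in the introduction) is proved, is to apply it to $L=R/\fp$, immediately yielding $\cd(\fm,M)\ge\dim R/\fp=n$ and hence $\LC^n_\fm(M)\ne 0$.
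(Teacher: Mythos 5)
Your proposal does not reach a complete proof, and the gap is precisely the structural input about coatomic modules that the paper's argument hinges on. The paper invokes Z\"oschinger's characterization (\cite[Satz 2.4]{Zrko}): for a coatomic $M$ over a local ring $(R,\fm)$ there is $t\ge 1$ with $\fm^tM$ finitely generated, equivalently (Lemma~\ref{L:fin}) $M/(0:_M\fm^t)$ is finite. Since $0:_M\fm^t$ is $\fm$--torsion, $\LC^i_\fm(0:_M\fm^t)=0$ for $i\ge 1$, and since $\dim(0:_M\fm^t)=0<n$, the finite quotient $M/(0:_M\fm^t)$ still has dimension $n$. The long exact sequence then gives an isomorphism $\LC^n_\fm(M)\cong\LC^n_\fm(M/(0:_M\fm^t))$, and Grothendieck non-vanishing applied to this \emph{finite} module finishes the proof. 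In other words, the right reduction is to a finite \emph{quotient} of $M$ (modulo a torsion submodule whose higher local cohomology vanishes), not to a finite \emph{submodule}.

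Your submodule $N=Rm$ of dimension $n$ is constructed correctly, but the short exact sequence $0\to N\to M\to M/N\to 0$ does not yield the conclusion: $M/N$ is only coatomic, can still have dimension $n$, and nothing forces the connecting map $\LC^{n-1}_\fm(M/N)\to\LC^n_\fm(N)$ to fail to be surjective. You acknowledge this obstruction but do not overcome it. The direct-limit ``plan'' --- writing $\LC^n_\fm(M)$ as a direct limit over finite submodules and ``tracking a nonzero class'' --- is not an argument; in a directed system the transition maps can kill any given class, and you give no mechanism (beyond invoking the words ``coatomic structure'') by which survival is guaranteed. Finally, your ``conceptual alternative'' via the inequality $\cd(\fm,L)\le\cd(\fm,M)$ (Proposition~\ref{P:cd}) would indeed give the result when combined with Grothendieck vanishing $\LC^i_\fm(M)=0$ for $i>n$, and it is not circular in the paper's logical order; but that proposition itself is proved by exactly the Z\"oschinger reduction you are missing, so you would still need to supply that ingredient. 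The essential idea you did not find is that coatomicity over a local ring guarantees $\fm^tM$ is finite for some $t$, which turns the problem into one about a finite module.
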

\begin{proof}
If $n = 0$, there is nothing to prove. Suppose $n \geq 1$ then from
\cite[Satz 2.4 $(i)\Rightarrow (iii)$]{Zrko} there is an integer $t
\geq 1$ such that $\fm^tM$ is finite and by \ref{L:fin} equivalently
$M/(0:_M{\fm^t})$ is finite. On the other hand $\dim_R
M/(0:_M{\fm^t})=\dim_R M=n$, and $\LC^{i}_{\fm}(0:_M{\fm^t})=0$ for
all $i\geq 1$. Making use of the exact sequence
$$
0\to 0:_M{\fm^t}\to M\to M/(0:_M{\fm^t})\to 0
$$
we get $\LC^{n}_{\fm}(M) \cong \LC^{n}_{\fm}(M/(0:_M{\fm^t}))$,
which is $\ne 0$, by \cite[Theorem 6.1.4]{BSh}.
\end{proof}

\begin{lem}\label{L:mar}[See also \cite[Corollary 2.5]{Mar}.]
If $R$ and $\fa$
are as before and $M$ is a finite $R$--module of dimension $n$, then
\begin{enumerate}
 \item[(a)] $\dim_R \LC^{n-i}_{\fa}(M)\leq i$.
 \item[(b)] If $(R,\fm)$ is a local ring,
then $\Supp_R(\LC^{n-1}_{\fa}(M))$ is a finite set consisting of
 prime ideals $\fp$ such that $\dim{R/\fp}\leq 1$.
\end{enumerate}
\end{lem}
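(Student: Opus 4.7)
My plan for (a) is to combine Grothendieck's vanishing theorem with an elementary dimension inequality. Fix $\fp \in \Supp_R(\LC^{n-i}_\fa(M))$. Flat base change in local cohomology gives $\LC^{n-i}_{\fa R_\fp}(M_\fp) \neq 0$, so Grothendieck's vanishing forces $\dim_{R_\fp} M_\fp \geq n-i$. The standard inequality
\[
\dim R/\fp + \dim_{R_\fp} M_\fp \leq \dim_R M = n,
\]
valid in any Noetherian ring (obtained by concatenating a saturated chain in $\Spec(R/\Ann_R M)$ descending from $\fp$ with one ascending from $\fp$), then yields $\dim R/\fp \leq i$. Taking the supremum over $\fp \in \Supp_R(\LC^{n-i}_\fa(M))$ gives $\dim_R \LC^{n-i}_\fa(M) \leq i$, as required.

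For (b), part (a) applied with $i=1$ already gives the dimension bound on primes in the support. For finiteness of the support, I first observe that since $R$ is local, any prime $\fp$ with $\dim R/\fp = 1$ satisfies $V(\fp) = \{\fp, \fm\}$. Consequently no prime in $\Supp \LC^{n-1}_\fa(M)$ other than possibly $\fm$ strictly contains another prime of the support, so every such $\fp$ is minimal in $\Supp \LC^{n-1}_\fa(M)$. Since minimal primes of the support of any module lie in its set of associated primes, the finiteness of $\Supp \LC^{n-1}_\fa(M)$ reduces to the finiteness of $\Ass_R(\LC^{n-1}_\fa(M))$.

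For the latter I would invoke the standard prime-filtration reduction: choose $0 = M_0 \subset M_1 \subset \dots \subset M_t = M$ with $M_j/M_{j-1} \cong R/\fq_j$, and use the long exact sequences of local cohomology to bound $\Ass_R(\LC^{n-1}_\fa(M)) \subseteq \bigcup_j \Ass_R(\LC^{n-1}_\fa(R/\fq_j))$. Grothendieck vanishing kills the summands with $\dim R/\fq_j < n-1$; the surviving summands, where $\dim R/\fq_j \in \{n-1, n\}$, have finite sets of associated primes by the finiteness result on top and penultimate local cohomology cited as \cite[Corollary 2.5]{Mar}.

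The main obstacle is exactly this last step. In view of the well-known counterexamples of Katzman and of Singh--Swanson to Huneke's general finiteness question for associated primes of local cohomology, the finiteness of $\Ass \LC^{n-1}_\fa(M)$ cannot be read off from the dimension bound in (a) alone and must be imported from the structural analysis of the ``one-below-top'' cohomological degree.
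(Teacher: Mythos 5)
Part (a) is correct and is essentially the paper's own argument: flat base change, Grothendieck vanishing to force $\dim_{R_\fp}M_\fp \geq n-i$, and the dimension inequality $\dim R/\fp + \dim_{R_\fp}M_\fp \leq \dim_R M$.

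Part (b) takes a genuinely different route and has gaps. First, the prime-filtration reduction is applied to the wrong invariant. From the short exact sequences $0 \to M_{j-1} \to M_j \to R/\fq_j \to 0$ and the segment $\LC^{n-1}_\fa(M_{j-1}) \to \LC^{n-1}_\fa(M_j) \to \LC^{n-1}_\fa(R/\fq_j)$ one gets $\Ass \LC^{n-1}_\fa(M_j) \subseteq \Ass K_j \cup \Ass \LC^{n-1}_\fa(R/\fq_j)$, where $K_j$ is the image of the left-hand map; $K_j$ is a \emph{quotient} of $\LC^{n-1}_\fa(M_{j-1})$, and since that module is typically not finite, $\Ass K_j$ is not controlled by $\Ass \LC^{n-1}_\fa(M_{j-1})$. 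So the inclusion $\Ass \LC^{n-1}_\fa(M) \subseteq \bigcup_j \Ass \LC^{n-1}_\fa(R/\fq_j)$ does not follow. (The filtration reduction does work for $\Supp$ rather than $\Ass$, so your preliminary step of trading $\Supp$ for $\Ass$ via minimal primes actually makes the argument worse, not better.) Second, even if one switches to $\Supp$, the reduction to $R/\fq_j$ with $\dim R/\fq_j = n$ is circular: that case is the statement being proved for a domain of dimension $n$. Third, the appeal to \cite[Corollary 2.5]{Mar} is in effect citing the result to be shown; the ``[See also \dots]'' tag in the lemma signals that the paper is giving an independent proof of a result related to Marley's, not deriving it from Marley.

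The paper's proof of (b) avoids all of this. Write $\fm=(x_1,\dots,x_r)$. For each $i$ one has $\dim M_{x_i} \leq n-1$, so $\LC^{n-1}_\fa(M)_{x_i} \cong \LC^{n-1}_{\fa R_{x_i}}(M_{x_i})$ is a \emph{top} local cohomology module of a finite module over a noetherian (non-local) ring and hence artinian by \cite[Exercise 7.1.7]{BSh}; in particular its support is finite. Any nonmaximal $\fp \in \Supp_R \LC^{n-1}_\fa(M)$ misses some $x_i$, so $\fp R_{x_i}$ lies in one of these finitely many supports, and the finiteness of $\Supp_R \LC^{n-1}_\fa(M)$ follows. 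This sidesteps both the filtration issue and the circularity. Your closing remark about Katzman and Singh--Swanson, while a reasonable caution about $\Ass$ of local cohomology in general, is not the obstruction here; the point is that the filtration-plus-citation route does not independently establish (b).
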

\begin{proof}(a) For $\fp\in \Supp_R(\LC^{n-i}_{\fa}(M))$, we get
$\LC^{n-i}_{\fa}(M)_\fp \cong \LC^{n-i}_{{\fa}R_{\fp}}(M_\fp)\ne
0$.Hence
 \cite[Theorem 6.1.2]{BSh} implies that $\dim{M_\fp}\geq n-i$ and therefore
we have
$$
\dim{R/\fp}\leq n-\dim{M_\fp}\leq i.
$$

(b) Let $\fm=(x_1,\dots,x_r)$. Then $\dim{M_{x_i}}\leq n-1$ for $1\leq
i\leq r$. Hence by \cite[Exercise 7.1.7]{BSh}
$\LC^{n-1}_{\fa{R}_{x_i}}(M_{x_i})$ is an artinian $R_{x_i}$--module and
$\Supp_{R_{x_i}}(\LC^{n-1}_{\fa}(M)_{x_i})$ is finite.
If $\fp\in\Supp_R( \LC^{n-1}_{\fa}(M))$ and $\fp\neq \fm$ then there is $i$
such that ${x_i}\notin \fp$, i.e.
${\fp}R_{x_i}\in\Supp_{R_{x_i}}(\LC^{n-1}_{\fa}(M)_{x_i})$. Hence
$\Supp_R(\LC^{n-1}_{\fa}(M))$ must be finite.
\end{proof}

\begin{prop}\label{P:kolc}
Let $M$ be a coatomic module of dimension $n\geq 1$ over the local
ring $(R,\fm)$ and let $\fa$ be an ideal of $R$. Then we have that
\begin{enumerate}
  \item[(a)] $\LC^{n}_{\fa}(M)$ is artinian and $\fa$--cofinite.
  \item[(b)] $\Att_R(\LC^{n}_{\fa}(M))=
   \{\fp\in{\Supp_R(M)}|\cd{(\fa,R/\fp)}=n\}$.
  \item[(c)] $\Supp_R(\LC^{n-1}_{\fa}(M))$ is a finite set
   consisting of prime ideals $\fp$ such that
   $\dim{R/\fp}\leq 1$.
\end{enumerate}
\end{prop}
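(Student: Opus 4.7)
The plan is to reduce all three assertions to known results for finite modules by a single preparatory argument. As in the proof of Theorem \ref{T:nonvan}, \cite[Satz 2.4]{Zrko} together with Lemma \ref{L:fin} yields an integer $t \geq 1$ for which $M' := M/(0:_M \fm^t)$ is a finite $R$-module. The submodule $N := 0:_M \fm^t$ is killed by $\fm^t$, and since $\fa \subseteq \fm$ (we may assume $\fa \neq R$) we have $\fa^t \subseteq \fm^t$, so $N$ is $\fa$-torsion and $\LC^{i}_{\fa}(N) = 0$ for all $i \geq 1$. From the short exact sequence $0 \to N \to M \to M' \to 0$ we then obtain isomorphisms $\LC^{i}_{\fa}(M) \cong \LC^{i}_{\fa}(M')$ for every $i \geq 1$. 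Moreover, because $n \geq 1$ prevents $M$ from being $\fm$-torsion, $M' \neq 0$ and $\fm \in \Supp_R M'$; combined with $\Supp_R N \subseteq \{\fm\}$ this gives $\dim M' = n$ and $\Supp_R M = \Supp_R M'$.

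Part (a) then follows at once: for $M'$ finite of dimension $n$ over a local ring, $\LC^{n}_{\fa}(M')$ is artinian by the standard top local cohomology result, and an artinian $\fa$-torsion module is automatically $\fa$-cofinite (\cite{Mel}). For (b), one uses the attached-primes description $\Att_R \LC^{n}_{\fa}(M') = \{\fp \in \Supp_R M' \mid \cd(\fa, R/\fp) = n\}$ available for finite modules; the isomorphism in top degree identifies this with $\Att_R \LC^{n}_{\fa}(M)$, and since $\Supp_R M = \Supp_R M'$ the right-hand side is unchanged after replacing $M'$ by $M$.

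For (c), if $n \geq 2$ then $n - 1 \geq 1$, so the isomorphism $\LC^{n-1}_{\fa}(M) \cong \LC^{n-1}_{\fa}(M')$ together with Lemma \ref{L:mar}(b) applied to the finite module $M'$ of dimension $n$ gives the conclusion. The case $n = 1$ must be handled separately because the isomorphism fails in degree zero: here $\LC^{0}_{\fa}(M) = \G_{\fa}(M) \subseteq M$, and one observes directly that $\Supp_R M = \Supp_R M'$ is finite, since $M'$ is a finite module of dimension one over a local ring and its support therefore consists of $\fm$ together with the finitely many minimal primes of $\Supp_R M'$, each of coheight at most one. The main technical point is the reduction of the first paragraph---specifically the observation that $0:_M \fm^t$ is automatically $\fa$-torsion---after which each of (a), (b), (c) follows by transporting the analogous statement from the finite-module setting.
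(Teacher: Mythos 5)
Your overall reduction---replacing $M$ by the finite module $M' = M/(0:_M\fm^t)$ via \cite[Satz 2.4]{Zrko} and Lemma~\ref{L:fin}, noting that $N=0:_M\fm^t$ is $\fa$-torsion so that $\LC^i_\fa(M)\cong\LC^i_\fa(M')$ for $i\ge 1$, and then transporting the finite-module results---is exactly the strategy of the paper, and you are actually more careful than the paper in two small places: you spell out why $N$ is $\fa$-torsion (the paper only says ``as in the proof of \ref{T:nonvan}'', where the ideal was $\fm$), and in part (c) you notice that the isomorphism fails in degree zero when $n=1$ and give a direct argument for that case, whereas the paper's ``use the isomorphism and \ref{L:mar}(b)'' silently ignores it. Your treatments of (b) and (c) are correct.

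There is, however, a genuine error in your justification of (a). You split the claim into ``$\LC^n_\fa(M')$ is artinian'' plus ``an artinian $\fa$-torsion module is automatically $\fa$-cofinite.'' The second assertion is false. Over a local ring every artinian module has support contained in $\{\fm\}\subset\V(\fa)$, so ``$\fa$-torsion'' is no restriction at all, and artinian modules need not be $\fa$-cofinite: take $R=k[[x,y]]$, $\fa=(x)$, and $M=\E(R/\fm)$. Then $M$ is artinian, but $0:_M x$ is Matlis dual to $R/xR\cong k[[y]]$, i.e.\ it is $\E_{k[[y]]}(k)$, which is not finitely generated; hence $\Hom_R(R/\fa,M)$ is not finite and $M$ is not $\fa$-cofinite. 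What you actually need is \cite[Proposition 5.1]{Mel} (the reference the paper cites), which asserts in one stroke that for a finite module $M'$ of dimension $n$ over a local ring the top local cohomology $\LC^n_\fa(M')$ is both artinian \emph{and} $\fa$-cofinite; equivalently, by \cite[Proposition 4.1]{Mel} it suffices to know that $0:_{\LC^n_\fa(M')}\fa$ is finite, and this finiteness is the content of Melkersson's result rather than an automatic consequence of artinianness. Replacing your intermediate claim with a direct appeal to that proposition repairs the proof and brings it in line with the paper's argument.
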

\begin{proof}
(a): As in the proof of \ref{T:nonvan} we have
\begin{equation}\label{E:iso}
\LC^{n}_{\fa}(M) \cong \LC^{n}_{\fa}(M/(0:_M{\fm^t}))
\end{equation}
for some $t\geq 1$ such that $M/(0:_M{\fm^t})$ is a finite
$R$--module. Consequently by \cite[Proposition 5.1]{Mel}
$\LC^{n}_{\fa}(M)$ is artinian and $\fa$--cofinite.

(b): Put $L=M/(0:_M{\fm^t})$ and note that $\Supp_R(L)=\Supp_R(M)$.
But by \cite[Theorem A]{DY1}
$$
\Att_R(\LC^{n}_{\fa}(L))=\{\fp\in{\Supp_R(L)}|\cd{(\fa,R/\fp)}=n\},
$$
so the assertion holds.
(c). Use the isomorphism~(\ref{E:iso}) and part (b) of \ref{L:mar}.
\end{proof}
However when $n=0$, $\LC^{n}_{\fa}(M)$ may not be artinian.
\begin{exam}\label{E:koart}
 $M={(R/\fm)}^{(\mathbb N)}$ is an
$\fm$--torsion coatomic module of dimension zero but is not artinian.
\end{exam}

\begin{lem}\label{L:cd}
Let $M$ be a finite or more generally a coatomic $R$--module. Then
$\cd{(\fa,M)}=0$ if and only if $\Supp_R(M)\subset \V{(\fa)}$.
\end{lem}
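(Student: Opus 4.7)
The plan is to prove the two directions separately. The implication $(\Leftarrow)$ will be immediate: if $\Supp_R(M)\subseteq\V(\fa)$, each element of $M$ is annihilated by a power of $\fa$, whence $M=\G_\fa(M)$ is $\fa$-torsion and $\LC^i_\fa(M)=0$ for $i\geq 1$, giving $\cd(\fa,M)=0$.

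For $(\Rightarrow)$, I would first replace $M$ by $N=M/\G_\fa(M)$. Since the class of coatomic modules is closed under quotients ($\Coass_R(N)\subseteq\Coass_R(M)\subseteq\Max R$), $N$ is again coatomic. The long exact sequence of $0\to\G_\fa(M)\to M\to N\to 0$, combined with $\LC^i_\fa(\G_\fa(M))=0$ for $i\geq 1$, gives $\LC^i_\fa(N)\cong\LC^i_\fa(M)=0$ for all $i\geq 1$; together with $\G_\fa(N)=0$ this reduces the problem to showing $N=0$, which is equivalent to $M=\G_\fa(M)$ and hence to $\Supp_R(M)\subseteq\V(\fa)$.

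I would next handle the finite case. If $N\neq 0$, then $\Ass_R(N)$ is a finite set of primes, none containing $\fa$ (since $\G_\fa(N)=0$), so prime avoidance yields $x\in\fa$ that is a non-zero-divisor on $N$. The long exact sequence attached to $0\to N\xrightarrow{x}N\to N/xN\to 0$, together with $\LC^0_\fa(N)=\LC^1_\fa(N)=0$, forces $\G_\fa(N/xN)=0$; but $N/xN$ is annihilated by $x\in\fa$ and hence $\fa$-torsion, so $N/xN=0$ and $xN=N$. Nakayama's lemma then yields $N=0$ in the local setting, the desired contradiction.

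For the general coatomic case, the plan is to localize at each non-maximal prime $\fp$ and reduce to the finite case over $R_\fp$: the module $N_\fp$ is finite over $R_\fp$ by the observation recalled in the introduction, and since local cohomology commutes with localization, $\LC^i_{\fa R_\fp}(N_\fp)=0$ for all $i$. The finite case in the local ring $R_\fp$ forces $N_\fp=0$, so $\Supp_R(N)\subseteq\Max R$. The remaining step is to eliminate maximal primes from $\Supp_R(N)$, using $\Coass_R(N)\subseteq\Max R$ together with $\G_\fa(N)=0$ and the full cohomological vanishing of $N$. This last step is the main obstacle, since localization at a maximal ideal no longer reduces to the finite case; it must instead be carried out by exploiting the interplay between coassociated primes and the vanishing of the local cohomology of $N$.
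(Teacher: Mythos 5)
There is a genuine gap in your finite case. You write that $N/xN$ ``is annihilated by $x\in\fa$ and hence $\fa$-torsion,'' but this inference is false: annihilation by the single element $x$ only makes $N/xN$ an $(x)$-torsion module, and $\fa$ may well contain elements acting non-nilpotently on $N/xN$ (take $R=k[y,z]_{(y,z)}$, $\fa=(y,z)$, $x=y$, $N=R$: then $N/xN\cong k[z]_{(z)}$ is not $\fa$-torsion). Thus $\G_\fa(N/xN)=0$ does not yield $N/xN=0$. Your argument can be repaired by observing that $N/xN$ again satisfies all the hypotheses, has strictly smaller dimension, and so vanishes by induction on $\dim N$ (the base case $\dim N=0$ giving $\Supp_R N\subseteq\{\fm\}$, whence $\fm\in\Ass_R N$ and $\G_\fa(N)\ne 0$ if $N\ne 0$). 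The paper instead observes directly that $\G_\fa(\overline{M})=0$ forces $r=\depth_\fa\overline{M}>0$, and Grothendieck's non-vanishing theorem \cite[Theorem 6.2.7]{BSh} then gives $\LC^r_\fa(\overline{M})\ne 0$, a contradiction; this is the clean version of the regular-sequence idea you were reaching for.

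Your coatomic case is, as you yourself acknowledge, unfinished. The paper's route avoids your ``main obstacle'' entirely: over a local ring $(R,\fm)$, a coatomic $M$ has $\fm^t M$ finite for some $t$ by Z\"oschinger \cite[Satz~2.4]{Zrko}, equivalently (Lemma~\ref{L:fin}) $M/(0:_M\fm^t)$ is finite. Since $0:_M\fm^t$ is killed by $\fm^t\supseteq\fa^t$, it is $\fa$-torsion and has vanishing higher local cohomology, so $\LC^r_\fa(M)\cong\LC^r_\fa(M/(0:_M\fm^t))$ for every $r>0$, and the statement reduces at once to the finite case (noting $\Supp_R(0:_M\fm^t)\subseteq\{\fm\}\subseteq\V(\fa)$). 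Your localization argument at non-maximal primes is correct as far as it goes and does give $\Supp_R(N)\subseteq\Max R$; incidentally, once one is reduced to the local case the remaining step is elementary ($N\ne 0$ forces $\fm\in\Ass_R(N)$, hence $\G_\fa(N)\ne 0$), so coassociated primes are not actually needed — but you did not supply this, and the structural fact about $\fm^t M$ is the intended and cleaner reduction.
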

\begin{proof}
$(\Leftarrow)$. Trivial.

$(\Rightarrow)$. We may assume that $(R,\fm)$ is local. First assume
that $M$ is finite.

If $\Supp_R(M)\not\subset \V{(\fa)}$, then the module
$\overline{M}=M/\G_{\fa}(M)$ is nonzero, and
$\G_{\fa}(\overline{M})=0$. Hence we have
$r=\depth_{\fa}\overline{M}>0$, but by \cite[Theorem 6.2.7]{BSh}
$\LC^r_\fa(\overline{M})\neq 0$. On the other hand
 $\LC^r_\fa(M)\cong \LC^r_\fa(\overline{M})$ and this is a contradiction.

Now suppose $M$ is coatomic. As before for any $r>0$ we have
$\LC^{r}_{\fa}(M) \cong \LC^{r}_{\fa}(M/(0:_M{\fm^t}))$ for some
$t\geq 1$ such that $M/(0:_M{\fm^t})$ is finite. Note that
$\Supp_R(M/(0:_M{\fm^t}))=\Supp_R(M)$ and use the result just shown
for finite modules.
\end{proof}

We next  generalize \cite[Theorem 1.4]{DY3}. See also
\cite[Theorem 2.2]{DNT}.
\begin{prop}\label{P:cd}
Let $\fa$ be an ideal of $R$ and $M$ a coatomic $R$--module. Let $N$
be an arbitrary module such that
 $\Supp_R(N)\subset\Supp_R(M)$, then $\cd{(\fa,N)}\leq \cd{(\fa,M)}$
\end{prop}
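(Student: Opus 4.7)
The plan is to reduce to the finite-module case proved by Dibaei and Yassemi in \cite[Theorem 1.4]{DY3} by localizing at each prime of $R$. Set $c=\cd(\fa,M)$; it suffices to prove $\LC^i_{\fa R_\fp}(N_\fp)=0$ for every $\fp\in\Spec R$ and every $i>c$, since this forces $\LC^i_\fa(N)=0$. When $\fp$ is non-maximal the introduction records that $M_\fp$ is a finite $R_\fp$-module, so with $\Supp N_\fp\subset\Supp M_\fp$ and $\cd(\fa R_\fp,M_\fp)\le c$ the Dibaei-Yassemi theorem directly yields $\cd(\fa R_\fp,N_\fp)\le c$.

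The nontrivial case is $\fp=\fm$ maximal. First I would verify that $M_\fm$ is coatomic as an $R_\fm$-module by using $\Coass_{R_\fm}(M_\fm)=\Ass_{R_\fm}\Hom_{R_\fm}(M_\fm,\E_{R_\fm}(R_\fm/\fm R_\fm))$ together with the identifications $\E_{R_\fm}(R_\fm/\fm R_\fm)=\E(R/\fm)$ and $\Hom_{R_\fm}(M_\fm,\E(R/\fm))=\Hom_R(M,\E(R/\fm))$; this shows $\Coass_{R_\fm}(M_\fm)$ corresponds to a subset of $\Ass_R\Hom_R(M,\E(R/\fm))\subset\Coass_R M\subset\Max R$, whose elements are also contained in $\fm$ and therefore must equal $\fm$. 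Next, with $M_\fm$ coatomic over the local ring $(R_\fm,\fm R_\fm)$, the strategy of Theorem~\ref{T:nonvan} applies: Zöschinger's Satz 2.4 from \cite{Zrko} combined with Lemma~\ref{L:fin} yields $t\ge 1$ such that $L:=M_\fm/(0:_{M_\fm}(\fm R_\fm)^t)$ is finite over $R_\fm$ and $K:=0:_{M_\fm}(\fm R_\fm)^t$ is $\fm R_\fm$-torsion, hence $\fa R_\fm$-torsion. Since $\LC^i_{\fa R_\fm}(K)=0$ for $i\ge 1$, the long exact sequence of $0\to K\to M_\fm\to L\to 0$ gives $\LC^i_{\fa R_\fm}(L)\cong\LC^i_{\fa R_\fm}(M_\fm)$ for $i\ge 1$, so $\cd(\fa R_\fm,L)\le c$. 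If $L=0$ then $M_\fm$, and therefore $N_\fm$, is $\fa R_\fm$-torsion and the conclusion is immediate. If $L\ne 0$ then $\fm R_\fm\in\Supp L$ forces $\Supp M_\fm=\Supp L$, and applying the Dibaei-Yassemi theorem to the finite $L$ over $R_\fm$ yields $\cd(\fa R_\fm,N_\fm)\le\cd(\fa R_\fm,L)\le c$.

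The main obstacle is the intermediate step that coatomicness is preserved by localization at a maximal ideal; once this is in place the remaining argument reassembles devices already deployed in Theorem~\ref{T:nonvan} and Proposition~\ref{P:kolc}, together with the standard vanishing of $\LC^i_{\fa R_\fm}$ in positive degrees on $\fa R_\fm$-torsion modules.
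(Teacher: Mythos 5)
Your proposal is correct and follows essentially the same strategy as the paper's proof: reduce to the local situation, use Z\"oschinger's structure theorem for coatomic modules over a local ring together with Lemma~\ref{L:fin} to replace $M$ by the finite quotient $M/(0:_M\fm^t)$ (since the kernel is $\fm$-torsion, hence $\fa$-torsion, so positive-degree local cohomology is unchanged), and then invoke Dibaei--Yassemi \cite[Theorem 1.4]{DY3}. The paper simply asserts ``we may assume $(R,\fm)$ is local'' (the authors rely on \cite[1.1, Folgerung]{Zrko}, cited elsewhere for the same reduction), whereas you explicitly verify that coatomicness passes to $M_\fm$ via $\Coass$ and Matlis duality and handle the non-maximal primes separately using finiteness of $M_\fp$; you also absorb the paper's $\cd(\fa,M)=0$ case (which they dispatch with Lemma~\ref{L:cd}) into the $L=0$ branch. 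These are organizational rather than conceptual differences.
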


\begin{proof}
We may assume that $(R,\fm)$ is local. Suppose $\cd{(\fa,M)}=0$,
then by \ref{L:cd} $\Supp_R(M)\subset \V{(\fa)}$. Hence
$\Supp_R(N)\subset \V{(\fa)}$ and therefore $\LC^{i}_{\fa}(N)=0$ for
all $i>0$, i.e. $\cd{(\fa,N)}=0$.

Let $\cd{(\fa,M)}\geq 1$, Then as before $\LC^{r}_{\fa}(M) \cong
\LC^{r}_{\fa}(M/(0:_M{\fm^t}))$ for some $t\geq 1$ such that
$M/(0:_M{\fm^t})$ is finite. Since
$$
\Supp_R(N)\subset\Supp_R(M)=\Supp_R(M/(0:_M{\fm^t})),
$$
we get from \cite[Theorem 1.4]{DY3}
$$
\cd{(\fa,N)}\leq \cd{(\fa,M/(0:_M{\fm^t}))}=\cd{(\fa,M)}.
$$
\end{proof}

Next we prove some vanishing and finiteness results for local
cohomology.

\begin{thm}\label{T:kofinlc}
Let $R$ be a noetherian ring, $\fa$ an ideal of $R$ and $M$ a finite
$R$--module. The following statements are equivalent:
\begin{enumerate}
  \item[(i)] $\LC^{i}_{\fa}(M)$ is coatomic for all $i<n$.
  \item[(ii)] $\Coass_R(\LC^i_\fa(M)) \subset \V{(\fa)}$ for all $i<n$.
  \item[(iii)] $\LC^{i}_{\fa}(M)$ is finite for all $i<n$.
\end{enumerate}
\end{thm}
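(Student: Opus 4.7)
The plan is to establish the cycle $\mathrm{(iii)} \Rightarrow \mathrm{(i)} \Rightarrow \mathrm{(ii)} \Rightarrow \mathrm{(iii)}$. The first implication is immediate, since every finitely generated module is coatomic.

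For $\mathrm{(i)} \Rightarrow \mathrm{(ii)}$, the key step I would isolate is the observation that if $N$ is an $\fa$-torsion $R$--module, then $\Hom_R(N, \E(R/\fm)) = 0$ for every $\fm \in \Max R$ with $\fm \not\supset \fa$. Indeed, $\fa + \fm = R$ implies $\fa^k + \fm^l = R$ for all $k, l \geq 1$; given $n \in N$ with $\fa^k n = 0$ and $f \in \Hom_R(N, \E(R/\fm))$ with $\fm^l f(n) = 0$, the element $f(n)$ is annihilated by $R$, hence is zero. Using the identity $\Coass_R(N) = \bigcup_{\fm \in \Max R} \Ass_R(\Hom_R(N, \E(R/\fm)))$, the union only runs over $\fm \supset \fa$. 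Since any $\fp \in \Ass_R(\Hom_R(N, \E(R/\fm)))$ is contained in $\fm$ (elements of $\E(R/\fm)$ have $\fm$-primary annihilators, and $\fp$ lies inside the annihilator of a nonzero element), coatomicity ($\Coass \subset \Max R$) forces each such $\fp$ to equal the corresponding $\fm$, so $\fp \supset \fa$.

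For $\mathrm{(ii)} \Rightarrow \mathrm{(iii)}$, the strategy is induction on $n$. The base cases $n \leq 1$ are trivial, since $\LC^0_\fa(M) \subset M$ is finite. For the inductive step, the hypothesis with parameter $n-1$ (via induction) gives that $\LC^i_\fa(M)$ is finite for all $i < n-1$. The Grothendieck spectral sequence $\Ext^p_R(R/\fa, \LC^q_\fa(M)) \Rightarrow \Ext^{p+q}_R(R/\fa, M)$, together with the finiteness of $\Ext^{n-1}_R(R/\fa, M)$ and of $\LC^i_\fa(M)$ for $i < n - 1$, then forces $\Hom_R(R/\fa, \LC^{n-1}_\fa(M)) = 0 :_{\LC^{n-1}_\fa(M)} \fa$ to be finite.

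The main obstacle will be to conclude from this data---namely that $N := \LC^{n-1}_\fa(M)$ is $\fa$-torsion with $\Coass_R(N) \subset \V(\fa)$ and $0 :_N \fa$ finite---that $N$ itself is finite. My plan is to examine the ascending filtration $0 :_N \fa \subset 0 :_N \fa^2 \subset \cdots$, whose union is $N$ (since $N$ is $\fa$-torsion) and whose successive quotients embed, via multiplication by a finite set of generators of $\fa^k$, into $(0 :_N \fa)^r$, hence are finite. The identity $\Coass_R(N/\fa N) = \Coass_R(N) \cap \V(\fa)$ combined with $\mathrm{(ii)}$ gives $\Coass_R(N/\fa N) = \Coass_R(N)$, so $N = \fa N$ would force $N = 0$; this Nakayama-type leverage is what I would exploit to show that the chain stabilizes and therefore $N$ is a finite iterated extension of finite modules, hence finite.
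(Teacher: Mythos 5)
Your proof takes a genuinely different route from the paper, and it is worth comparing the two before pointing out where yours is incomplete. The paper first invokes \cite[Theorem 9.6.1]{BSh} and \cite[1.1, Folgerung]{Zrko} to reduce to the local case, where (i)$\Rightarrow$(ii) is literally tautological ($\Coass\subset\Max R=\{\fm\}\subset\V(\fa)$), and then for (ii)$\Rightarrow$(iii) it cites Z\"oschinger's \cite[Satz 1.2]{Zrkoass} to produce a $t$ with $\fa^t\LC^i_\fa(M)$ finite, whence (since $\LC^i_\fa(M)$ is $\fa$--torsion) $\fa^s\LC^i_\fa(M)=0$ for some $s$, and Faltings' criterion \cite[Proposition 9.1.2]{BSh} finishes. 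Your version stays global, gives a direct and correct proof of (i)$\Rightarrow$(ii) via the vanishing of $\Hom_R(N,\E(R/\fm))$ for $\fm\not\supset\fa$, and replaces the two black-box citations in (ii)$\Rightarrow$(iii) by an induction with the spectral sequence $\Ext^p_R(R/\fa,\LC^q_\fa(M))\Rightarrow\Ext^{p+q}_R(R/\fa,M)$. The spectral-sequence step is fine: the relevant outgoing differentials from $E_r^{0,n-1}$ land in $E_r^{r,n-r}$ with $n-r\le n-2$, and those terms are finite by the inductive hypothesis, so $E_2^{0,n-1}=\Hom_R(R/\fa,\LC^{n-1}_\fa(M))$ is a finite iterated extension of finite modules.

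The gap is in the last paragraph. After the reduction you need: if $N$ is $\fa$--torsion, $0:_N\fa$ is finite and $\Coass_R(N)\subset\V(\fa)$, then $N$ is finite. Your Nakayama observation is correct as far as it goes --- indeed $\Coass_R(N/\fa N)=\Coass_R(N)\cap\V(\fa)=\Coass_R(N)$, and $\Coass_R(L)=\emptyset$ forces $L=0$ because $\prod_\fm\E(R/\fm)$ is an injective cogenerator --- but the statement ``$\fa N=N\Rightarrow N=0$'' does not, by itself, yield stabilization of the ascending chain $0:_N\fa\subset 0:_N\fa^2\subset\cdots$. That inference is precisely the content of Z\"oschinger's Satz 1.2 (that $\Coass_R(N)\subset\V(\fa)$ forces $\fa^tN$ to be finite for some $t$), which is a nontrivial structural result about coassociated primes, not a formal consequence of the Nakayama-type implication. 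As evidence that your weaker hypothesis is genuinely weaker, note that $N=\E(R/\fm)\oplus R/\fm$ over a local domain of positive dimension with $\fa=\fm$ satisfies $\fm N\ne N$ and $0:_N\fm$ finite, yet $N$ is not finite and $\Coass_R(N)=\{0,\fm\}\not\subset\V(\fm)$; so it is the full $\Coass$ inclusion, not the Nakayama consequence of it, that must be exploited. As written, the step ``this Nakayama-type leverage is what I would exploit to show that the chain stabilizes'' is a declaration of intent rather than an argument, and I do not see how to close it without either proving or citing Z\"oschinger's Satz 1.2.
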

\begin{proof}
By \cite[Theorem 9.6.1]{BSh} and\cite[1.1, Folgerung]{Zrko} we may
assume that $(R,\fm)$ is a local ring.
 \item[(i)]$\Rightarrow$ (ii) It is trivial by the definition of
  coatomic modules.
 \item[(ii)]$\Rightarrow$ (iii) By \cite[Satz 1.2]{Zrkoass} there is
  $t \geq 1$ such that $\fa^t\LC^i_\fa(M)$
 is finite for all $i<n$. Therefore there is $s \geq t$ such that
$\fa^s\LC^i_\fa(M)=0$ for all $i<n$. Then apply \cite[Proposition
9.1.2]{BSh}.
 \item[(iii)]$\Rightarrow$ (i) Any finite $R$--module is coatomic.
\end{proof}

The following results are generalizations of \cite[Proposition 3.1]{Yo}

\begin{thm}\label{T:vanko1}
Let $\fa$ be an ideal of $R$ and $M$ a finite $R$--module and let
$r\ge 1$. The following statements are equivalent:
\begin{enumerate}
 \item[(i)] $\LC^i_\fa(M)=0$ for all $i\ge r$.
 \item[(ii)] $\LC^i_\fa(M)$ is finite for all $i\ge r$.
 \item[(iii)] $\LC^i_\fa(M)$ is coatomic for all $i\ge r$.
\end{enumerate}
\end{thm}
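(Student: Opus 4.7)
The implications $(i)\Rightarrow(ii)\Rightarrow(iii)$ are immediate since $0$ is finite and every finite module is coatomic, while $(ii)\Rightarrow(i)$ is precisely Yoshida's Proposition~3.1 of \cite{Yo}. I would therefore focus on $(iii)\Rightarrow(ii)$, from which $(i)$ follows by a second appeal to Yoshida, and the loop closes.

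The strategy is to mimic the proofs of \ref{T:mmart} and \ref{T:kofinlc}. For any non-maximal prime $\fp$, coatomicness of $\LC^i_\fa(M)$ forces $\LC^i_\fa(M)_\fp \cong \LC^i_{\fa R_\fp}(M_\fp)$ to be a finite $R_\fp$--module (by the remark in the introduction about coatomic modules), so Yoshida applied over $R_\fp$ to the finite $R_\fp$--module $M_\fp$ yields $\LC^i_\fa(M)_\fp = 0$ for every $i \ge r$. Hence $\Supp_R \LC^i_\fa(M) \subset \Max{R}$ for all $i\ge r$, and by \cite[Theorem 9.6.1]{BSh} together with \cite[1.1, Folgerung]{Zrko} we may assume that $(R,\fm)$ is local. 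Each $\LC^i_\fa(M)$ with $i \ge r$ is then $\fm$--torsion; coatomicness gives $\Coass_R \LC^i_\fa(M) \subset \{\fm\}$, and combining with the standard inclusion $\Coass \subset \Supp \subset \V{(\fa)}$ refines this to $\Coass_R \LC^i_\fa(M) \subset \V{(\fa)}$ for all $i \ge r$.

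Z\"oschinger's Satz~1.2 of \cite{Zrkoass} then furnishes an integer $t$ with $\fa^t \LC^i_\fa(M)$ finite for all $i\ge r$; only finitely many indices matter, by $\cd(\fa,M) < \infty$, so a uniform $t$ is available. Since each $\fa^t \LC^i_\fa(M)$ is also $\fa$--torsion, enlarging $t$ to some $s$ produces $\fa^s \LC^i_\fa(M) = 0$ for all $i\ge r$. Applying \cite[Proposition~9.1.2]{BSh} exactly as in the proof of \ref{T:kofinlc} should now force $\LC^i_\fa(M)$ to be finite for every $i \ge r$, yielding $(ii)$. The principal obstacle I anticipate is this last appeal: the Brodmann--Sharp proposition is ordinarily invoked on an initial range $i<n$, whereas we require its conclusion in the co-final range $i\ge r$. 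This will have to be arranged carefully---for instance by applying the proposition with $n$ chosen above $\cd(\fa,M)$ so that the high indices contribute only zeros, or by bypassing finiteness altogether and using the $\fa^s$--annihilation together with the $\fm$--torsion and coatomic structure to force vanishing directly. In either case, once finiteness (or vanishing) is established in the range $i\ge r$, Yoshida instantly delivers~$(i)$.
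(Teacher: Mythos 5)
Your outer loop is fine, and the implications $(i)\Rightarrow(ii)\Rightarrow(iii)$ and the reduction of $(ii)\Rightarrow(i)$ to Yoshida are exactly as in the paper. The difficulty is in $(iii)\Rightarrow(ii)$, and there your route diverges from the paper's and, as you yourself sense, does not close.

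The paper's proof of $(iii)\Rightarrow(i)$ is a one-liner once you make the right observation: coatomic modules satisfy Nakayama's lemma over a local ring (if $N\ne 0$ is coatomic it has a maximal submodule $N'$ with $N/N'\cong R/\fm$, so $\fa N\subset \fm N\subset N'\subsetneq N$). Yoshida's proof of \cite[Proposition 3.1]{Yo} uses only two ingredients about the class of finite modules: it is a Serre subcategory, and its members obey Nakayama. Both hold for coatomic modules. So after reducing to the local case via \cite[Proposition 3.1]{Yo} and \cite[1.1, Folgerung]{Zrko}, one literally re-runs Yoshida's argument with ``coatomic'' substituted for ``finite'': pass to $M/\Gamma_\fa(M)$, choose an $M$-regular $a\in\fa$, induct on $\dim M$, identify $\LC^c_\fa(M)/a\LC^c_\fa(M)\cong\LC^c_\fa(M/aM)$ where $c$ is the top nonvanishing index, get $\LC^c_\fa(M/aM)=0$ by the inductive hypothesis, and conclude $\LC^c_\fa(M)=a\LC^c_\fa(M)=0$ by Nakayama. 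That is the whole proof.

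Your path---localize at nonmaximal primes, reduce to local, obtain $\Coass\subset\V(\fa)$, invoke Z\"oschinger to get $\fa^t\LC^i_\fa(M)$ finite, then $\fa^s$-annihilation---is internally coherent up to that point (with one minor slip: ``the standard inclusion $\Coass\subset\Supp$'' is in fact false in general; for instance $\Coass_R(\E(R/\fm))=\Att_R(\E(R/\fm))$ contains minimal primes of a complete local domain while $\Supp=\{\fm\}$. You do not need it: in the local case coatomicness gives $\Coass\subset\{\fm\}\subset\V(\fa)$ directly.) The genuine gap is the final appeal to \cite[Proposition 9.1.2]{BSh}. That proposition governs the initial range $i<n$; there is no way to ``arrange $n$ carefully'' to make it cover the cofinal range $i\ge r$, because the hypothesis $\fa^s\LC^i_\fa(M)=0$ for $i<r$ is simply not available, and finiteness for $i\ge r$ does not follow from annihilation alone. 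What does finish the argument from the point you reach is exactly Yoshida's induction: once you have $\fa^s\LC^i_\fa(M)=0$ for all $i\ge r$, take $a\in\fa^s$ regular on $M/\Gamma_\fa(M)$ and induct on dimension to force vanishing. In other words, the correct ``bypass'' you gesture at at the end is precisely the Nakayama-style descent that the paper invokes at the start, making your preliminary localization and Z\"oschinger steps superfluous.
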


\begin{proof}
$(i)\Rightarrow (ii)\Rightarrow (iii)$ Trivial.

$(iii)\Rightarrow (i)$ By use of \cite[Proposition 3.1]{Yo} and
\cite[1.1, Folgerung]{Zrko} we may assume that $(R,\fm)$ is a local
ring.
 Note that coatomic modules satisfies Nakayama's lemma.
So the proof is the same as in \cite[Proposition 3.1]{Yo}.
\end{proof}

\begin{cor}\label{C:vanko2}
 Let $M$ be a coatomic $R$--module.
If $\LC^i_\fa(M)$ is coatomic for all $i\geq r$, where $r\geq 1$,
  then $\LC^i_\fa(M)=0$ for all $i\geq r$.
\end{cor}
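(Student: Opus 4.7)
The plan is to reduce to the local case and then replace the coatomic module $M$ by a finite quotient, so that Theorem~\ref{T:vanko1} can be invoked directly. I would first localize: both the hypotheses (coatomicness of $M$ and of each $\LC^i_\fa(M)$ for $i\ge r$) and the conclusion ($\LC^i_\fa(M)=0$ for $i\ge r$) can be checked after localization at each maximal ideal, using \cite[1.1, Folgerung]{Zrko} to make coatomicness a local property together with the standard isomorphism $\LC^i_\fa(M)_\fm \cong \LC^i_{\fa R_\fm}(M_\fm)$. I may also assume $\fa\subset\fm$, since otherwise $\fa=R$ in the local ring and all local cohomology vanishes trivially.

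Next, because $M$ is coatomic over the local ring $(R,\fm)$, by \cite[Satz 2.4]{Zrko} there exists $t\ge 1$ such that $M/(0:_M \fm^t)$ is a \emph{finite} $R$--module; this is the device already exploited in the proofs of \ref{T:nonvan}, \ref{P:kolc} and \ref{L:cd}. The crucial step is to transfer the coatomic hypothesis on the $\LC^i_\fa(M)$ to this finite quotient. The submodule $0:_M \fm^t$ is annihilated by $\fm^t$, so it is naturally a module over the Artinian ring $R/\fm^t$; inside this ring, the image of $\fa$ is contained in the nilpotent maximal ideal $\fm/\fm^t$, hence is itself nilpotent. Thus $\fa^s(0:_M \fm^t)=0$ for some $s$, so $0:_M \fm^t$ is $\fa$--torsion, and consequently $\LC^i_\fa(0:_M \fm^t)=0$ for all $i\ge 1$.

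The long exact sequence arising from
$$
0\to 0:_M \fm^t \to M \to M/(0:_M \fm^t)\to 0
$$
then yields isomorphisms $\LC^i_\fa(M)\cong \LC^i_\fa(M/(0:_M \fm^t))$ for every $i\ge 1$. In particular, for all $i\ge r$ the module $\LC^i_\fa(M/(0:_M \fm^t))$ is coatomic. Applying Theorem~\ref{T:vanko1} to the finite module $M/(0:_M \fm^t)$, the implication (iii)$\Rightarrow$(i) gives $\LC^i_\fa(M/(0:_M \fm^t))=0$ for $i\ge r$, whence $\LC^i_\fa(M)=0$ for $i\ge r$.

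The main obstacle---and really the heart of the argument---is the vanishing of higher $\fa$--local cohomology of $0:_M \fm^t$; once that nilpotence observation is in place, everything else is a repackaging of machinery already developed earlier in the paper.
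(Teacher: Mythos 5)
Your proof is correct and follows the same strategy as the paper: reduce to the local case, replace $M$ by the finite quotient $M/(0:_M\fm^t)$ via Z\"oschinger's Satz~2.4 together with the vanishing of $\LC^i_\fa(0:_M\fm^t)$ for $i\ge 1$, then apply Theorem~\ref{T:vanko1}. You have helpfully made explicit the nilpotence observation ($\fa$ becomes nilpotent in $R/\fm^t$, so $0:_M\fm^t$ is $\fa$-torsion) and the fact that the isomorphism $\LC^i_\fa(M)\cong \LC^i_\fa(M/(0:_M\fm^t))$ holds for \emph{all} $i\ge 1$---so the coatomic hypothesis really does transfer for all $i\ge r$---details the paper compresses into the phrase ``as before.''
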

\begin{proof}
 We may assume that $(R,\fm)$ is a local ring.
So as before there is an isomorphism
$$
\LC^{r}_{\fa}(M) \cong \LC^{r}_{\fa}(M/(0:_M{\fm^t}))
$$
 for some $t\ge 1$ such that $M/(0:_M{\fm^t})$ is finite,
and then use  $(iii)\Rightarrow (i)$ of
  \ref{T:vanko1}.
\end{proof}

\begin{cor}\label{C:vanco1}
Let $\fa$ an ideal of $R$ and $M$ a finite $R$--module. If
$c=\cd(\fa,M)> 0$, then $\LC^{c}_{\fa}(M)$ is not coatomic in
particular it is not finite.
\end{cor}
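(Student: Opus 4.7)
The plan is to deduce this corollary directly as a contrapositive of Theorem \ref{T:vanko1}, which was just established. By the very definition of $c=\cd(\fa,M)$, the module $\LC^{c}_{\fa}(M)$ is nonzero while $\LC^{i}_{\fa}(M)=0$ for every $i>c$. Since the zero module is coatomic, the family $\{\LC^{i}_{\fa}(M)\}_{i\ge c}$ consists of coatomic modules as soon as $\LC^{c}_{\fa}(M)$ itself is coatomic.

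First I would assume for contradiction that $\LC^{c}_{\fa}(M)$ is coatomic. Combined with the vanishing of $\LC^{i}_{\fa}(M)$ for $i>c$, this gives coatomicness of $\LC^{i}_{\fa}(M)$ for all $i\ge c$. Then I would apply the implication $(iii)\Rightarrow(i)$ of \ref{T:vanko1} with $r=c$; the hypothesis $c>0$ is exactly what is needed to satisfy the requirement $r\ge 1$ of that theorem. The conclusion forces $\LC^{i}_{\fa}(M)=0$ for all $i\ge c$, contradicting $\LC^{c}_{\fa}(M)\ne 0$. Hence $\LC^{c}_{\fa}(M)$ cannot be coatomic, and in particular it cannot be finite, since every finite $R$-module is coatomic.

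There is essentially no obstacle here beyond unwinding the definitions; the real content sits in \ref{T:vanko1}, and this corollary is the natural ``sharp boundary'' consequence analogous to how \ref{C:mmart} was derived from \ref{T:mmart}. The only subtle point to flag is the role of the hypothesis $c>0$: without it one cannot invoke \ref{T:vanko1}, and indeed when $c=0$ the module $\LC^{0}_{\fa}(M)=\G_{\fa}(M)$ is a submodule of the finite module $M$ and is therefore finite (hence coatomic), so the conclusion genuinely fails.
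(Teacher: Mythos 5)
Your proof is correct and is the argument the paper intends (the corollary is stated without proof, immediately after \ref{T:vanko1}, exactly parallel to how \ref{C:mmart} follows from \ref{T:mmart}): assume $\LC^{c}_{\fa}(M)$ is coatomic, note the higher local cohomologies vanish and are thus trivially coatomic, invoke $(iii)\Rightarrow(i)$ of \ref{T:vanko1} with $r=c\ge 1$, and contradict $\LC^{c}_{\fa}(M)\neq 0$. Your remark on the necessity of $c>0$ is a correct and useful aside.
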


\begin{cor}\label{C:vanko3}
If $M$ is coatomic and $r\geq 1$, the following are equivalent:
\begin{enumerate}
 \item[(i)] $\LC^i_\fa(M)=0$ for all $i\geq r$.
 \item[(ii)] $\LC^i_\fa(M)$ is finite for all $i\geq r$.
 \item[(iii)] $\LC^i_\fa(M)$ is coatomic for all $i\geq r$.
\end{enumerate}
\end{cor}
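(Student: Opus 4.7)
The plan is essentially to observe that this corollary is a three-way equivalence that almost writes itself from results already established in the paper. First I would dispatch the two easy implications: $(i) \Rightarrow (ii)$ is trivial, since the zero module is finite; and $(ii) \Rightarrow (iii)$ was noted in the introduction, where it is remarked that every finite $R$--module is coatomic.

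The only substantive direction is $(iii) \Rightarrow (i)$, and this is exactly the statement of Corollary \ref{C:vanko2}. Since the hypotheses there ($M$ coatomic, $r \ge 1$, $\LC^i_\fa(M)$ coatomic for $i \ge r$) match those of the present corollary word-for-word, I would simply cite \ref{C:vanko2} to close the loop. No extra reductions or auxiliary lemmas are needed.

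Consequently I anticipate no genuine obstacle: the hard work has already been done in the proofs of \ref{T:vanko1} and \ref{C:vanko2}, namely the reduction to the local case via \cite[1.1, Folgerung]{Zrko}, the passage from the coatomic module $M$ to the finite quotient $M/(0:_M\fm^t)$ (using that $M/(0:_M\fm^t)$ is finite while $0:_M\fm^t$ is $\fm$--torsion, so that $\LC^i_\fa(M) \cong \LC^i_\fa(M/(0:_M\fm^t))$ for $i\geq 1$), and the exploitation of Nakayama's lemma for coatomic modules in proving $(iii) \Rightarrow (i)$ of \ref{T:vanko1}. The present corollary is just a convenient repackaging of these facts into a single equivalence statement, mirroring the three-way equivalence of \ref{T:vanko1} but for the wider class of coatomic modules.
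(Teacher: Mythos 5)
Your proposal is correct and matches the paper's (implicit) approach exactly: the paper states Corollary~\ref{C:vanko3} without proof, precisely because $(iii)\Rightarrow(i)$ is verbatim Corollary~\ref{C:vanko2} and the remaining implications are trivial, as you observe.
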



\end{document}